\documentclass[11pt]{amsart}

\usepackage{amsfonts,amssymb,amsthm,eucal,color,bbm,verbatim,enumitem,graphicx,bm}

\usepackage[french,english]{babel}
\usepackage[margin=1.25in]{geometry}

\usepackage{hyperref}

\newtheorem{thm}{Theorem}
\newtheorem{cor}[thm]{Corollary}
\newtheorem{lemma}[thm]{Lemma}
\newtheorem{prop}[thm]{Proposition}

\newcommand{\hess}{\mathrm{Hess\,}}
\newcommand{\R}{\mathbb{R}}
\newcommand{\E}{\mathbb{E}}

\newcommand{\Prob}{\mathbb{P}}
\newcommand{\N}{\mathbb{N}}

\newcommand{\C}{\mathbb{C}}

\newcommand{\I}{\textbf{1}}

\DeclareMathOperator{\tr}{tr}

\newcommand{\inprod}[2]{\left\langle #1, #2 \right\rangle}
\renewcommand{\Re}{\operatorname{Re}}

\newcommand{\abs}[1]{\left\vert #1 \right\vert}
\newcommand{\norm}[1]{\left\Vert #1 \right\Vert}

\newcommand{\eps}{\varepsilon}

\DeclareMathOperator{\var}{Var}
\DeclareMathOperator{\cov}{Cov}

\newcommand{\Set}[2]{\left\{#1 \mathrel{} \middle| \mathrel{} #2 \right\}}

\newcommand{\Orthogonal}[1]{\mathbb{O}\left(#1\right)}

\newcommand{\mat}[2]{\mathcal{M}_{#1}(#2)}
\newcommand{\symmat}[2]{\mathcal{M}^s_{#1}(#2)}
\newcommand{\asymmat}[2]{\mathcal{M}^{as}_{#1}(#2)}

\allowdisplaybreaks[3]

\author{Elizabeth S.\ Meckes and Mark W.\ Meckes}

\address{Department of Mathematics, Applied Mathematics, and
  Statistics, Case Western Reserve University, 10900 Euclid Ave.,
  Cleveland, Ohio 44106, U.S.A.}

\email{elizabeth.meckes@case.edu}

\address{Department of Mathematics, Applied Mathematics, and
  Statistics, Case Western Reserve University, 10900 Euclid Ave.,
  Cleveland, Ohio 44106, U.S.A.}

\email{mark.meckes@case.edu}

\title[Spectrum of rotationally invariant random matrix
ensembles]{Fluctuations of the spectrum in rotationally invariant random matrix ensembles}

\begin{document}

\begin{abstract}
  We investigate traces of powers of random matrices whose
  distributions are invariant under rotations (with respect to the
  Hilbert--Schmidt inner product) within a real-linear subspace of the
  space of $n\times n$ matrices.  The matrices we consider may be real or
  complex, and Hermitian, antihermitian, or general.  We use Stein's
  method to prove multivariate central limit theorems, with
  convergence rates, for these traces of powers, which imply central
  limit theorems for polynomial linear eigenvalue statistics.  In
  contrast to the usual situation in random matrix theory, in our
  approach general, nonnormal matrices turn out to be easier to study
  than Hermitian matrices.
\end{abstract}

\maketitle

\section{Introduction}

The limiting behavior of the eigenvalues of random matrices is a
central problem in modern probability, with applications and
connections in statistics, physics, and beyond.  The eigenvalues of
the classical ensembles have been studied extensively, and much is
known.  However, there are many other ensembles which are natural in
applied contexts that have been less thoroughly explored.  In this
paper, we study the eigenvalues of rotationally invariant random
matrix ensembles; i.e., probability measures on real-linear spaces of
$n \times n$ matrices which are invariant under rotations within those
spaces.  We emphasize that this is different from the more common
assumption of invariance under conjugation by orthogonal or unitary
$n\times n$ matrices; ensembles with the latter invariance property
are most often referred to as ``matrix models'' or as ``orthogonally
invariant'' or ``unitarily invariant'' ensembles, respectively, but
are unfortunately also sometimes referred to as rotationally
invariant.  The spaces we consider include the spaces of all real or
complex $n \times n$ matrices, the space of all $n \times n$ Hermitian
matrices, or others.  The classical Gaussian random matrix ensembles
are of this type, and so are random matrices chosen uniformly from the
sphere with respect to the Hilbert--Schmidt norm.  Beyond the
classical Gaussian cases, such ensembles have been studied in the
physics literature (see, e.g.,
\cite{AdToKu,CaDe,DeCa,Guhr,LiZh,MuKl}), frequently under the names
``fixed trace ensembles'' (for matrices uniformly distributed on a
sphere for the Hilbert--Schmidt norm) or ``norm-dependent ensembles'';
fixed trace ensembles have also been investigated in the numerical
analysis literature \cite{Demmel,Edelman-cond,Edelman-det} and in more
mathematically oriented work on random matrix theory
\cite{EiKn,GoGoLe,GoGo}.

In this paper we investigate the fluctuations of traces of powers of
such random matrices, showing that these fluctuations have a jointly
Gaussian distribution, under certain hypotheses, in the
high-dimensional limit.  This implies, in particular, that linear
eigenvalue statistics $\sum_{j=1}^n f(\lambda_j)$ are asymptotically
Gaussian, where $\lambda_1, \dots, \lambda_n$ are the eigenvalues of
our random matrix and $f$ is a polynomial function.  Gaussian limits
for fluctuations of linear eigenvalue statistics have been studied
intensively for other random matrix ensembles; we mention in
particular \cite{AnZe,BaSi,Johansson2,LyPa,Shcherbina,SiSo} for
Wigner-type matrices (random Hermitian matrices whose entries on and
above the diagonal are independent),
\cite{DiEv,DoSt,DoSt2,Fulman2,Johansson,Soshnikov,Stein,Wieand} for
Haar-distributed random matrices from the classical compact groups,
and \cite{CiErSc,NoPe,Rider,RiSi,RiVi} for the typically most
difficult case of random matrices with all independent entries.

Our proofs are based on the infinitesimal or continuous version
Stein's method of exchangeable pairs, which has found a number of
applications in random matrix theory, and which is particularly well
suited to the analysis of settings like ours that exhibit continuous
geometric symmetries.  This method has been used to prove central
limit theorems for linear eigenvalue statistics for various random
matrix ensembles in \cite{DoSt,DoSt2,Fulman2,JoSm,JoSmWe,Stein,Stolz};
other applications in random matrix theory appear in
\cite{ChMe,Fulman1,EM-linear,MM-quantum}.  We also mention
\cite{Chatterjee-gauge}, which does not apply Stein's method for
distributional approximation but uses a continuous family of
exchangeable pairs to prove identities for expectations, similar to
our proof of Theorem \ref{T:traces-of-powers-means} below; and
\cite{Chatterjee,NoPe}, which apply other versions of Stein's method
to investigate linear eigenvalue statistics of random matrices.

An unusual feature of our proofs is that they allow a unified approach
to both the Hermitian and non-Hermitian cases. More surprisingly, it
turns out that, in contrast to the usual situation in random matrix
theory, the non-Hermitian case is easier to handle here, for reasons
that will be discussed below.

One can reasonably object that in the non-Hermitian case it is natural
to consider more general linear eigenvalue statistics; for example in
the polynomial setting one should allow the test function $f$ to be a
polynomial in both $z$ and $\overline{z}$. However, as in the present
work, most known central limit theorems for linear eigenvalue
statistics for non-normal random matrices require $f$ to be analytic
or otherwise highly restricted (as in, e.g.,
\cite{NoPe,ORoRe,Rider,RiSi}), and even so, the proofs are more
difficult than in the Hermitian case.  An exception is the very recent
work \cite{CiErSc}, which handles random matrices with i.i.d.\ complex
entries and test functions with only $2+\epsilon$ derivatives.

We now turn to a more precise description of the random matrix
ensembles we consider and our results.

The random matrices we consider are drawn from a real-linear subspace
$V$ of the space $\mat{n}{\C}$ of $n\times n$ matrices over $\C$. We
take $V$ to be one of the following: $\mat{n}{\C}$ itself; the space
$\mat{n}{\R}$ of $n\times n$ matrices over $\R$; the space
$\symmat{n}{\R}$ of real symmetric $n\times n$ matrices; the space
$\symmat{n}{\C}$ of complex Hermitian $n\times n$ matrices; the space
$\asymmat{n}{\R}$ of real antisymmetric $n\times n$ matrices; and the
space $\asymmat{n}{\C}$ of complex anti-Hermitian $n\times n$
matrices.  All of these spaces are real inner product spaces with
respect to the inner product $\inprod{A}{B}=\Re\tr(AB^*)$, and have
the associated Hilbert--Schmidt norm $\norm{A} = \sqrt{\tr (AA^*)}$.
(We will also make some use of the complex (Hilbert--Schmidt) inner
product, and the operator norm $\norm{A}_{op}$.)

The distributions we consider on $V$ are rotationally invariant in the
sense that they are invariant under linear isometries of the entire
space $V$ equipped with this inner product; this is stronger than the
more commonly considered property of invariance under multiplication
or conjugation by a unitary matrix in $\mat{n}{\C}$. If $X \in V$ has
a rotationally invariant distribution, then we can write
$X = \norm{X} \widetilde{X}$, where $\widetilde{X}$ is uniformly
distributed on the unit sphere (with respect to the Hilbert--Schmidt
norm) of $V$ and is independent from $\norm{X}$. (In fact, in the
proofs below it will be convenient to use a slightly different
normalization for $\widetilde{X}$.)

Rotationally invariant distributions can also be described concretely
in terms of orthonormal bases on each space.  Let $E_{jk}$ denote the
$n\times n$ matrix with a one in the $(j,k)$ position and zeroes
everywhere else.  For $j<k$, let
$F_{jk}=\frac{1}{\sqrt{2}}(E_{jk}+E_{kj})$ and
$G_{jk}=\frac{1}{\sqrt{2}}(E_{jk}-E_{kj})$.  Let $d$ be the (real)
dimension of $V$. We denote by $\{B_\alpha\}_{\alpha=1}^d$ orthonormal
bases (with respect to the real inner product
$\inprod{A}{B}=\Re\tr(AB^*)$) for the spaces $V$ above, as follows.
\begin{center}\begin{tabular}{|c|c|}
\hline
$V$&$\phantom{\Big|}\{B_\alpha\}_{\alpha=1}^d$\\
\hline\hline
$\mat{n}{\C}$&$\phantom{\Big|}\{E_{jk}\}_{1\le j,k\le n}\cup \{iE_{jk}\}_{1\le
               j,k\le n}$\\
\hline
$\mat{n}{\R}$&$\phantom{\Big|}\{E_{jk}\}_{1\le j,k\le n}$\\
\hline
$\symmat{n}{\R}$&$\phantom{\Big|}\{E_{jj}\}_{1\le j\le n}\cup \{F_{jk}\}_{1\le
               j<k\le n}$\\
\hline
$\symmat{n}{\C}$&$\phantom{\Big|}\{E_{jj}\}_{1\le j\le n}\cup \{F_{jk}\}_{1\le
               j<k\le n}\cup \{iG_{jk}\}_{1\le
               j<k\le n}$\\
\hline
$\asymmat{n}{\R}$&$\phantom{\Big|} \{G_{jk}\}_{1\le
               j<k\le n}$\\
\hline
$\asymmat{n}{\C}$&$\phantom{\Big|}\{iE_{jj}\}_{1\le j\le n}\cup \{G_{jk}\}_{1\le
               j<k\le n}\cup \{iF_{jk}\}_{1\le
               j<k\le n}$\\
\hline
\end{tabular}\end{center}

For each choice of $V$, consider a random vector
$\{X_\alpha\}_{\alpha=1}^d$ with a rotationally invariant distribution
in $\R^d$, normalized so that \(\E \sum_{\alpha=1}^d X_\alpha^2=n,\) and define
$$X=\sum_{\alpha=1}^{d}X_\alpha B_\alpha.$$ 
The random matrix $X \in V$ then has a rotationally invariant
distribution in $V$ and satisfies \(\E\|X\|^2=n\). Note that choosing
the random vector $\{X_\alpha\}_{\alpha=1}^d$ according to a Gaussian
distribution results in various classical random matrix ensembles: in
the case of unrestricted real or complex matrices, we have the real,
respectively complex Ginibre ensembles, and in the case of real
symmetric or complex Hermitian matrices, we have the Gaussian
Orthogonal Ensemble (GOE) and Gaussian Unitary Ensemble (GUE),
respectively.

Our first main result identifies the means of the random variables
$W_p = \tr X^p$ for $p \in \N$.  This result is essentially known, and
can easily be deduced from the Gaussian cases, where the
classical proofs make essential use of the independence of the
entries.  Here we give an independent proof which is an easy
by-product of the analysis of the exchangeable pair used to prove
Theorems \ref{T:traces-of-powers-clt-nonnormal} and
\ref{T:traces-of-powers-clt-normal} below.  (As noted above, a similar
approach was used in \cite{Chatterjee-gauge} to prove identities for
expectations of functions of random orthogonal matrices.)

\begin{thm}\label{T:traces-of-powers-means}
  Let $X$ be a random matrix in $V\subseteq \mat{n}{\C}$ as above,
  whose distribution is invariant under rotations of V.

  Suppose that $\E\|X\|^2=n$ and that for each $k$, there is a
  constant $\alpha_k$ depending only on $k$ such that
  \[
    t_k(X)=\left|n^{-k/2}\E\|X\|^{k}-1\right|\le\frac{\alpha_k}{n}.
  \]

  For $p\in\N$, let $W_p=\tr(X^p)$.  In all cases, if $p$ is odd, then
  $\E W_p=0$.  For $p=2r$,
  \[
    \E W_p=\begin{cases}0 & \text{if } V=\mat{n}{\C},\\
      1+O\left(\frac{1}{n}\right) & \text{if } V=\mat{n}{\R},\\
      nC_r+O(1) & \text{if } V=\symmat{n}{\C} \text{ or } \symmat{n}{\R},\\
      (-1)^{r}nC_r+O(1) & \text{if }
      V=\asymmat{n}{\C} \text{ or } \asymmat{n}{\R},
    \end{cases}
  \]
  where $C_r = \frac{1}{r+1}\binom{2r}{r}$ is the $r$th Catalan
  number. 
\end{thm}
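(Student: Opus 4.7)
My plan is to derive a single algebraic identity from rotational invariance (which is the key consequence of the exchangeable pair computation) and then specialize it to $f(X)=\tr X^p$.  Starting from the invariance $\E f(e^{tA}X)=\E f(X)$ for every antisymmetric linear map $A\cln V\to V$, differentiating twice at $t=0$ and summing over an orthonormal basis of antisymmetric operators on $V\cong\R^d$ yields
\[
\E\bigl[\|X\|^2\Delta f(X)-\hess f(X)(X,X)-(d-1)\inprod{\nabla f(X)}{X}\bigr]=0,
\]
with $\Delta$ and $\hess$ computed in the coordinates of $\{B_\alpha\}$. For $f$ homogeneous of degree $p$, the Euler identities $\inprod{\nabla f}{X}=pf$ and $\hess f(X,X)=p(p-1)f$ collapse this to
\begin{equation}\label{eq:master}\tag{$\ast$}
\E\bigl[\|X\|^2\Delta f(X)\bigr]=p(p+d-2)\,\E f(X).
\end{equation}

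The odd-$p$ case is immediate: $X\mapsto-X$ is a rotation of $V$ in each of the six spaces, so $-X\stackrel{d}{=}X$ and $\E W_p=(-1)^p\E W_p=0$. For even $p$, differentiating gives
\[
\Delta\tr X^p=p\sum_{j=0}^{p-2}\sum_\alpha\tr\bigl(X^{p-2-j}B_\alpha X^j B_\alpha\bigr),
\]
and by cyclicity of the trace this reduces to evaluating the Casimir-type quantity $C(Y):=\sum_\alpha B_\alpha Y B_\alpha$ in each space. A direct computation in the bases of the excerpt gives $C(Y)=0$ for $V=\mat{n}{\C}$ (the $\{E_{jk}\}$ and $\{iE_{jk}\}$ contributions cancel), $C(Y)=Y^T$ for $V=\mat{n}{\R}$, $C(Y)=(\tr Y)I$ for Hermitian $Y$ in $V=\symmat{n}{\C}$, $C(Y)=\tfrac12(Y+(\tr Y)I)$ for symmetric $Y$ in $V=\symmat{n}{\R}$, and formulas of the same shape but with an internal minus sign (so $C(Y)=-\tfrac12 Y$ for antisymmetric $Y$) in the two antisymmetric cases. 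The vanishing of $C$ together with \eqref{eq:master} immediately gives $\E W_p=0$ for $V=\mat{n}{\C}$. In the remaining cases, using the independence of $\|X\|$ from $\widetilde X:=X/\|X\|$ to factor $\E[\|X\|^2 g(X)]=\E\|X\|^p\,\E g(\widetilde X)$ for any $g$ homogeneous of degree $p-2$ turns \eqref{eq:master} into a recursion for $m_p:=\E\tr\widetilde X^p$. In the symmetric/Hermitian cases the leading-order recursion reads $n^2\,m_{2r}\sim\sum_{\ell=0}^{r-1}m_{2\ell}\,m_{2r-2-2\ell}$, which with $m_0=n$ is precisely the Catalan recursion, giving $m_{2r}\sim C_r/n^{r-1}$; multiplying by $\E\|X\|^{2r}\sim n^r$ yields $\E W_{2r}=nC_r+O(1)$. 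The antisymmetric cases inherit the factor $(-1)^r$ from the minus sign in their $C(Y)$: solving the analogous recursion with the ansatz $m_{2r}\sim(-1)^r C_r/n^{r-1}$ recovers the Catalan numbers with alternating signs. For $V=\mat{n}{\R}$, the mixed traces $\tr(X^{p-2-j}(X^T)^j)$ appearing in $\Delta\tr X^p$ are simplified via the further invariance $X\stackrel{d}{=}X^T$ (transposition is also a rotation of $V$), and a short induction yields $\E W_{2r}=1+O(1/n)$.

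The main technical obstacle is bookkeeping the subleading corrections in the recursion. There are three sources of error: (i) the $O(1/n)$ correction in $\E\|X\|^p$ supplied by the hypothesis on $t_k$; (ii) the covariance terms $\E[\tr\widetilde X^j\,\tr\widetilde X^{p-2-j}]-m_jm_{p-2-j}$, especially delicate when $j$ is odd so $m_j=0$; and (iii) the mixed $X/X^T$ moments in the $V=\mat{n}{\R}$ case. For (ii), I would use a Poincar\'e inequality on the unit sphere of $V$ (spectral gap of order $1/d\sim 1/n^2$) together with the gradient bound $\|\nabla\tr\widetilde Y^k\|^2\le k^2\tr\widetilde Y^{2(k-1)}\le k^2$ on the sphere (valid for Hermitian $\widetilde Y$ and $k\ge 2$), giving $\var\tr\widetilde X^j=O(1/n^2)$; the resulting covariance contributions are absorbed into the stated $O(1)$ error. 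For (iii), the transpose symmetry allows the mixed moments to be identified with their reversed versions, enabling a closed inductive scheme.
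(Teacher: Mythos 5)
Your master identity $(\ast)$ is correct, and it is in fact the same identity the paper extracts from its exchangeable pair: taking expectations in part (1) of Lemma \ref{T:limits_all_spaces} and using exchangeability gives exactly $\E\bigl[\|X\|^2\, p\sum_{j}\tr\bigl(X^{j}C(X^{p-2-j})\bigr)\bigr]=p(p+d-2)\E W_p$ with $C(Y)=\sum_\alpha B_\alpha Y B_\alpha$, and your Casimir computations agree with the paper's. The odd-$p$ argument, the $\mat{n}{\C}$ case, and the Catalan recursion for the Hermitian/symmetric cases all match the paper. There are, however, two concrete gaps. First, your covariance control is too weak for large $p$. The Poincar\'e inequality with the crude gradient bound $\tr\widetilde{Y}^{2k-2}\le 1$ gives only $\var(\tr\widetilde{X}^k)=O(n^{-2})$, i.e.\ $\var(W_k)=O(n^{k-2})$ on the sphere of radius $\sqrt{n}$; the covariance terms then contribute $O(n^{r-4})$ to $\E W_{2r}$, which is not $O(1)$ for $r\ge 5$ and overwhelms the main term $nC_r$ for $r\ge 6$. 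One needs $\var(W_k)=O(1)$, which the paper obtains from the concentration result in Proposition \ref{T:star-poly-concentration}. Your scheme can be repaired internally by bootstrapping: the inductively known $m_{2k-2}=O(n^{2-k})$ controls $\E\abs{\nabla\tr\widetilde{X}^k}^2=k^2 m_{2k-2}$, and Poincar\'e then yields $\var(\tr\widetilde{X}^k)=O(n^{-k})$, which suffices --- but as written the bound does not close the argument.

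Second, the $V=\mat{n}{\R}$ case has a genuine gap. Since $C(Y)=Y^T$ there, the identity expresses $\E W_p$ in terms of the mixed moments $\E\bigl[\|X\|^2\tr(X^{j}(X^T)^{p-2-j})\bigr]$, which for $p\ge 6$ (e.g.\ $\tr((X^T)^2X^2)=\|X^2\|^2$ and $\tr((X^T)^3X)$) are not polynomials in the $W_q$ and $\|X\|^2$. The transpose symmetry gives only $\tr(X^{j}(X^T)^{k})=\tr(X^{k}(X^T)^{j})$, which is a deterministic identity of each realization and determines nothing; your inductive scheme therefore does not close on these quantities. The paper computes the mixed moments independently by expanding in matrix entries and integrating monomials over the sphere (Lemma \ref{T:traces-of-products}: the term with $j=(p-2)/2$ contributes $n^2+O(n)$ and all others $O(n)$), and some such separate computation is an unavoidable ingredient. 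A minor further point: in the antisymmetric cases your formula $C(Y)=-\tfrac12 Y$ applies only to antisymmetric arguments, but $X^{p-2-j}$ is symmetric when $p-2-j$ is even, so you need $C$ on general matrices, namely $C(Y)=\tfrac12\bigl(Y^T-(\tr Y)I\bigr)$ for $\asymmat{n}{\R}$; the resulting extra term $\tfrac12 W_{p-2}$ is lower order, so the conclusion is unaffected.
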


In Theorem \ref{T:traces-of-powers-means}, as well as all the
following results, the $O$ terms refer to $n \to \infty$, with implied
constants that may depend on $p$ (or $m$ below) and the constants
$\alpha_k$, but do not otherwise depend on the precise distribution of
$X$.

In just the first case of Theorem \ref{T:traces-of-powers-means} (when
$V = \mat{n}{\C}$), the hypothesis on $t_k(X)$ can be replaced by the
weaker assumption that $t_k(X) < \infty$ for each $k$; that is, simply
that all moments of $\norm{X}$ are finite.  In the other cases that
hypothesis can be weakened to assuming each $t_k(X)$ is $o(1)$, at the
expense of more complicated version of the error terms.  We have
chosen here to assume a simple and quite mild hypothesis that lets us
state a clean result.

Theorems \ref{T:traces-of-powers-clt-nonnormal} and
\ref{T:traces-of-powers-clt-normal} describe the fluctuations of the
$W_p$, formulated as  comparisons of integrals of $C^2$ test
functions.  In what follows, for $g\in C^2(\R^m)$,
\[
M_1(g)=\sup_{x\in\R^m}|\nabla g(x)|
\] denotes the Lipschitz constant of $g$ and 
\[
M_2(g)=\sup_{x\in\R^m}\|\hess(g)(x)\|_{op}
\]  
the maximum operator norm of the Hessian of $g$.  For $g:\C^m\to\R$,
these quantities are computed by identifying $g$ with a function on
$\R^{2m}$.  

We begin with the cases of unrestricted real or complex $n \times n$
matrices.

\begin{thm}\label{T:traces-of-powers-clt-nonnormal}
  Let $X$ be a random matrix in $V = \mat{n}{\C}$ or $\mat{n}{\R}$,
  whose distribution is invariant under rotations of $V$.

  Suppose that $\E\|X\|^2=n$ and that for each $k$, there is a
  constant $\alpha_k$ depending only on $k$ such that
  \[
    t_k(X)=\left|n^{-k/2}\E\|X\|^{k}-1\right|\le\frac{\alpha_k}{n}.
  \]

  Fix $m\in\N$ with $m \ge 3$ and
\[W=(W_1,W_2,\ldots,W_m)=(\tr(X),\tr(X^2),\ldots,\tr(X^m))\in\R^m.\]
\begin{enumerate}
\item If $V=\mat{n}{\C}$ and $G$ is a standard complex Gaussian random vector in $\C^m$, then for any $f\in C^2(\C^m)$, 
\[\big|\E f(W)-\E f(\Sigma^{1/2}G)\big|\le \frac{\kappa_mM_2(f)}{n},\]
where $\Sigma$ is the diagonal matrix with $p$-$p$ entry given by
\(\sigma_{pp}=p\) and $\kappa_m$ is a positive constant depending only
on $m$ and $\alpha_1,\ldots, \alpha_m$.

\item If $V=\mat{n}{\R}$ and $G$ is a standard Gaussian random vector in $\R^m$, then for any $f\in C^2(\R^m)$, 
\[\big|\E f(W-\E W)-\E f(\Sigma^{1/2}G)\big|\le \frac{\kappa_m(M_1(f)+M_2(f))}{n},\]
where $\Sigma$ is the diagonal matrix with $p$-$p$ entry given by
\(\sigma_{pp}=p\) and $\kappa_m$ is a positive constant depending only
on $m$ and $\alpha_1,\ldots, \alpha_m$.
\end{enumerate}
\end{thm}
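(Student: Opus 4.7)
The plan is to apply the infinitesimal (continuous) version of Stein's method of exchangeable pairs—as developed in \cite{ChMe,EM-linear,Stein} and several of the references in the introduction—with an exchangeable pair built directly from the rotational symmetry of the law of $X$. Writing $X=\sum_\alpha X_\alpha B_\alpha$ identifies $V$ with $\R^d$. Let $K$ be a random $d\times d$ antisymmetric matrix, independent of $X$, with an $\Orthogonal{d}$-invariant law (for instance, the antisymmetrization of a matrix of i.i.d.\ standard Gaussians), and set
\[
  X_t=\sum_{\alpha=1}^{d}(e^{tK}X)_\alpha B_\alpha.
\]
Since $e^{tK}$ is orthogonal and independent of $X$, rotational invariance of the law of $X$ makes $(X,X_t)$ exchangeable for every $t\ge 0$, and $X_t\to X$ as $t\to 0$.

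With $W=(W_1,\ldots,W_m)$, $W_p=\tr(X^p)$ and $W_t=W(X_t)$, I expand $X_t=X+tKX+\tfrac{t^2}{2}K^2X+O(t^3)$ (viewed in $\R^d$) and use $\E K=0$ and $\E[K_{\alpha\beta}K_{\gamma\delta}]=\delta_{\alpha\gamma}\delta_{\beta\delta}-\delta_{\alpha\delta}\delta_{\beta\gamma}$. Then the conditional drift $\tfrac{1}{t^2}\E[W_t-W\mid X]$ and the conditional covariance $\tfrac{1}{t^2}\E[(W_t-W)(W_t-W)^*\mid X]$ converge to the spherical Laplacian and the associated carr\'e du champ of the $W_p$'s at $X$. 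Since $W_p$ is a homogeneous polynomial of degree $p$ in the $X_\alpha$'s, the key algebraic inputs are two Casimir-type sums for the basis $\{B_\alpha\}$, namely $\sum_\alpha B_\alpha M B_\alpha$ and $\sum_\alpha\tr(B_\alpha A)\overline{\tr(B_\alpha B)}$. A direct computation using the explicit bases from the table yields
\[
\sum_{\alpha=1}^d B_\alpha M B_\alpha = \begin{cases}0, & V=\mat{n}{\C},\\ M^T, & V=\mat{n}{\R},\end{cases}
\]
and analogous expressions for the bilinear sums. Substituted into $\Delta W_p=p(p-1)\sum_{j=0}^{p-2}\sum_\alpha\tr(B_\alpha X^j B_\alpha X^{p-2-j})$, the first identity gives $\Delta W_p\equiv 0$ in the complex case—that is, $W_p$ is a harmonic polynomial of degree $p$, which is precisely why the non-Hermitian case is easier than the Hermitian one—and a closed-form sum of mixed $X,X^T$ traces in the real case. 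In both cases the sphere-corrected drift takes the form $-\lambda_p W_p+\mathrm{error}$ with $\lambda_p$ proportional to $p$, giving $\Lambda=\mathrm{diag}(1,\ldots,m)$.

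The bilinear Casimir identity reduces the carr\'e du champ to $pq\tr(X^{p-1}(X^{q-1})^*)$ (complex case) or its real analogue. For $p\ne q$ the expectation vanishes: in the complex case via the $U(1)$-symmetry $X\mapsto e^{i\theta}X$, which sends $W_p$ to $e^{ip\theta}W_p$, and in the real case via a parity/degree-counting argument using the full $\Orthogonal{d}$-invariance of the law of $X$. For $p=q$, the diagonal term evaluates to $p\|X\|^{2(p-1)}\approx pn^{p-1}$ which after the normalization in the exchangeable pair produces $\Sigma_{pp}=p$. Plugging the drift and covariance into the abstract multivariate infinitesimal Stein lemma then yields
\[
|\E f(W-\E W)-\E f(\Sigma^{1/2}G)|\le C_m\bigl(M_1(f)\cdot\mathrm{err}_{\mathrm{drift}}+M_2(f)\cdot\mathrm{err}_{\mathrm{cov}}\bigr),
\]
with both error terms controlled by the hypothesis $t_k(X)\le\alpha_k/n$ and therefore $O(1/n)$. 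In the complex case, the $U(1)$-symmetry also forces $\E W_p=0$ exactly (consistent with Theorem~\ref{T:traces-of-powers-means}), so no centering is needed and the $M_1(f)$ term disappears.

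The main obstacle I anticipate is the bilinear Casimir identity together with the off-diagonal vanishing of $\E\tr(X^{p-1}(X^T)^{q-1})$ for $p\ne q$ in the real case: unlike the complex case the $U(1)$-symmetry is unavailable, so one must extract the vanishing from the $\Orthogonal{d}$-invariance combined with parity/homogeneity considerations on the basis. A secondary technical difficulty is the calibration between the Euclidean and spherical Laplacians on $V$—since $\|X\|$ is not exactly $\sqrt n$, the radial discrepancy produces error terms that must be absorbed into the $1/n$ rate using the $t_k$ hypothesis.
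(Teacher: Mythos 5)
Your overall architecture is the same as the paper's: an infinitesimal exchangeable pair obtained by a small random rotation of $V \cong \R^d$, the algebraic identities $\sum_\alpha B_\alpha M B_\alpha = 0$ for $\mat{n}{\C}$ and $= M^T$ for $\mat{n}{\R}$ (which are exactly why $E=0$ and the $M_1$ term drops in the complex case), a diagonal $\Lambda$ proportional to $p$, and the identification of the conditional covariance with $\tr\bigl(X^{p-1}(X^{q-1})^*\bigr)$. However, there are two genuine gaps. First, your evaluation of the diagonal covariance term is wrong: $\tr\bigl(X^{p-1}(X^{p-1})^*\bigr) = \|X^{p-1}\|^2$, which is \emph{not} $\|X\|^{2(p-1)} \approx n^{p-1}$. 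For $X$ uniform on the sphere of radius $\sqrt{n}$ in $\mat{n}{\C}$ one has $\E\tr\bigl(X^{p-1}(X^{p-1})^*\bigr) = n + O(1)$, and establishing this requires a genuine moment computation over the sphere (expanding the trace in matrix entries and identifying the dominant index patterns, as in the paper's Lemma \ref{T:prelim-nosymm-c}), or else an exchangeability identity relating $\E|W_p|^2$ to this quantity. Your claimed value $pn^{p-1}$ is off by a factor of $n^{p-2}$ and, if used, would not produce $\Sigma_{pp}=p$ after normalization; so the central computation identifying the limiting covariance is missing.

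Second, the abstract Stein bound requires an $L^1$ bound on the \emph{random} error $E'$, i.e.\ on $\bigl|\|X\|^2\tr\bigl(X^{p-1}(X^{q-1})^*\bigr) - \E\bigl[\|X\|^2\tr\bigl(X^{p-1}(X^{q-1})^*\bigr)\bigr]\bigr|$ and on $|W_p\overline{W_q} - \E W_p\overline{W_q}|$. The hypothesis $t_k(X)\le \alpha_k/n$ only controls the radial factor $\|X\|/\sqrt{n}$; it says nothing about the fluctuations of $\tr P(\widetilde X)$ for the angular part $\widetilde X$ uniform on the sphere. One needs a concentration inequality for traces of $*$-polynomials of a uniform point on a sphere in a subspace of $\mat{n}{\C}$ of dimension $\gtrsim n^2$ (the paper imports this as Proposition \ref{T:star-poly-concentration} from \cite{MeSz}); without such an input the bound $\E\|E'\| = O(n^{-1}\|\Lambda^{-1}\|_{op}^{-1})$ is not justified. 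A minor further correction: in the real case the off-diagonal expectations $\E\bigl[\|X\|^2\tr\bigl(X^{p-1}(X^{q-1})^T\bigr)\bigr]$ do \emph{not} vanish when $p-q$ is even and nonzero; they are merely $O(n)$, which is small enough after normalization but cannot be dismissed by a symmetry argument alone.
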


As in Theorem \ref{T:traces-of-powers-means}, the hypothesis on $t_k$
can be weakened somewhat, at the expense of a more complicated version
of the conclusion.

Theorem \ref{T:traces-of-powers-clt-nonnormal} immediately implies the
following.

\begin{cor}
  \label{T:polynomial-clt-nonnormal}
  For each $n$, let $X_n$ be an $n\times n$ satisfying the hypotheses
  of Theorem \ref{T:traces-of-powers-clt-nonnormal} (with constants
  $\alpha_k$ independent of $n$). Given any polynomial function,
  $g:\C \to \C$, define $X_{g,n} = \tr g(X_n)$. Then the stochastic
  process $\{X_{g,n} - n g(0) \}_g$ indexed by polynomials converges
  as $n \to \infty$, in the sense of finite dimensional distributions,
  to a centered complex-valued Gaussian process $\{Z_g\}_g$ with
  covariance given by
  \[
    \E Z_g \overline{Z_h} = \frac{1}{\pi} \int_D g'(z)
    \overline{h'(z)} \ d^2 z,
  \]
  where $D = \Set{z \in \C}{\abs{z} \le 1}$ and $d^2 z$ refers to
  integration with respect to Lebesgue measure.
\end{cor}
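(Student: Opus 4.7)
The plan is to derive the corollary directly from Theorem~\ref{T:traces-of-powers-clt-nonnormal}(1) together with a short polar-coordinate integral. I focus on $V=\mat{n}{\C}$, which matches the covariance formula in the statement; note that in this case Theorem~\ref{T:traces-of-powers-means} gives $\E W_p=0$, so centering by $n g(0)$ is already the correct centering.

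Fix polynomials $g_1,\ldots,g_k$ of degree at most $m$, and expand $g_j(z)=g_j(0)+\sum_{p=1}^m a_p^{(j)}z^p$, so that
\[
X_{g_j,n}-n g_j(0)=\sum_{p=1}^m a_p^{(j)} W_p,\qquad W_p:=\tr(X_n^p).
\]
Thus the vector $\bigl(X_{g_j,n}-n g_j(0)\bigr)_{j=1}^k$ is a fixed $\C$-linear image of $W:=(W_1,\ldots,W_m)\in\C^m$. Theorem~\ref{T:traces-of-powers-clt-nonnormal}(1) gives
\[
\bigl|\E f(W)-\E f(\Sigma^{1/2}G)\bigr|\le \frac{\kappa_m M_2(f)}{n}
\]
for every $f\in C^2(\C^m)$ with bounded Hessian, where $\Sigma=\operatorname{diag}(1,\ldots,m)$ and $G$ is a standard complex Gaussian in $\C^m$. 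A standard mollification argument upgrades this estimate to weak convergence $W\Rightarrow\Sigma^{1/2}G$, using tightness, which follows from uniform boundedness of $\E\|W\|^2$ (obtained either by direct computation exploiting rotational invariance, or by applying the theorem to smooth bounded truncations of the squared coordinates). The continuous mapping theorem applied to the linear map above then yields joint convergence
\[
\bigl(X_{g_j,n}-n g_j(0)\bigr)_{j=1}^k \Rightarrow (Z_{g_j})_{j=1}^k,\qquad Z_g:=\sum_{p=1}^m a_p\sqrt{p}\,G_p,
\]
where $G_1,\ldots,G_m$ are i.i.d.\ standard complex Gaussian. The right-hand vector is jointly centered complex Gaussian, and since $m$ and $k$ are arbitrary, this gives the asserted finite-dimensional convergence of the whole process.

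For the covariance, using $\E G_p\overline{G_q}=\delta_{pq}$ one has $\E Z_g\overline{Z_h}=\sum_{p=1}^m p\,a_p\overline{b_p}$, where $h(z)=h(0)+\sum_p b_p z^p$. In polar coordinates $z=re^{i\theta}$ one computes $\int_D z^{p-1}\overline{z}^{q-1}\,d^2z=\frac{\pi}{p}\,\delta_{pq}$ for $p,q\ge 1$, so expanding $g'(z)=\sum_p p a_p z^{p-1}$ and similarly for $h'$ gives
\[
\frac{1}{\pi}\int_D g'(z)\overline{h'(z)}\,d^2z = \sum_{p,q\ge 1} pq\,a_p\overline{b_q}\cdot\frac{\delta_{pq}}{p}=\sum_p p\,a_p\overline{b_p},
\]
matching the claimed formula. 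I expect no serious obstacle; the only semi-technical step is passing from the $C^2$-with-bounded-Hessian estimate of Theorem~\ref{T:traces-of-powers-clt-nonnormal} to weak convergence against all bounded continuous test functions, which is a routine mollification-plus-tightness argument.
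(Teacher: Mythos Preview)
Your argument is correct and matches the paper's approach: the paper simply states that the corollary ``immediately implies'' from Theorem~\ref{T:traces-of-powers-clt-nonnormal}, and you have spelled out exactly the natural way to read this implication (linear image of $W$, identification of the covariance via the polar-coordinate integral). One small simplification: to pass from the $C^2$ estimate to weak convergence, you do not need mollification plus tightness; it suffices to take $f(w)=e^{i\langle\xi,w\rangle}$ (identifying $\C^m\cong\R^{2m}$), which lies in $C^2$ with bounded Hessian, and invoke L\'evy's continuity theorem directly.
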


Results of the same form as Corollary \ref{T:polynomial-clt-nonnormal}
are proved in \cite{RiSi,CiErSc} and \cite{NoPe} for random matrices
with independent complex or real entries (satisfying some technical
conditions), respectively.  In \cite{RiSi} the test functions need not
be polynomials, but are required to be analytic on a neighborhood
of the disc of radius $4$; in \cite{CiErSc} this is weakened
substantially to $C^{2+\epsilon}$ test functions.  One might hope to
extend Corollary \ref{T:polynomial-clt-nonnormal} from polynomials to
analytic or still more general test functions by approximation (as is
done, for example, in \cite{DoSt} in the case of Haar-distributed
random unitary matrices).  However, the dependence on the constants
$\kappa_m$ in Theorem \ref{T:traces-of-powers-clt-nonnormal} on $m$
provided by our proofs is insufficient to carry out such an
approximation argument.  (Moreover, as seen in \cite{RiVi,CiErSc},
extending beyond analytic functions requires a more complicated
description of the limiting covariance structure.)

\bigskip

There are several key differences between the Hermitian case and the
case of unrestricted complex matrices, the most crucial of which is
that $W_2=\tr(X^2)=\tr(XX^*)=\|X\|^2$ when $X$ is Hermitian.  In
particular, a multivariate central limit theorem cannot hold in
general for the vector
\[W=(W_1,W_2,\ldots,W_m)\]
because the second component need not have Gaussian fluctuations.  In the case of $X$ uniformly distributed on the sphere of
radius $\sqrt{n}$ in $\symmat{n}{\C}$, $W_2$ is deterministic, and so
one could hope for a central limit theorem involving a covariance
matrix of rank $m-1$ (and indeed this is the case).

A related difference from the non-Hermitian case is that $\E W_p$ is
of order $n$ for all even $p$ in the Hermitian case; a consequence of
this fact is that it is necessary to make a stronger (though still
rather mild) concentration hypothesis for $\norm{X}$ than in Theorems
\ref{T:traces-of-powers-means} and
\ref{T:traces-of-powers-clt-nonnormal}.

\begin{thm}\label{T:traces-of-powers-clt-normal}
  Let $X$ be a random matrix in $V = \symmat{n}{\C}$ or $\symmat{n}{\R}$,
  whose distribution is invariant under rotations of $V$.

  Suppose that $\E\|X\|^2=n$ and that for each $k$, there is a
  constant $\alpha_k$ depending only on $k$ such that
  \[t_k(X)=\left|n^{-k/2}\E\|X\|^{k}-1\right|\le\frac{\alpha_k}{n^2}.\]   
  
  Fix $m\in\N$ with $m \ge 3$ and
  \[W=(W_1,W_2,\ldots,W_m)=(\tr(X),\tr(X^2),\ldots,\tr(X^m))\in\R^m.\]

  Let $Y_2=\|X\|^2-n$ and define 
  \[
    Z=(Z_1,Z_3,Z_4,\ldots,Z_m)\qquad\qquad Z_p=W_p-\E W_p-\frac{p\E
      W_p}{2n}Y_2.
  \]
  Let $\Sigma=A^{-1}B,$ where $A$ and $B$ are indexed by
  $\{1,\ldots,m\}\setminus\{2\}$ with  entries
  \[
    a_{pq}=\begin{cases}
      -2pC_{(p-2-q)/2}& \text{if $1\le q\le p-2$ and $p-q$ is even,}\\
      p& \text{if }q=p,\\
      0&\mbox{otherwise}.
    \end{cases}
  \]
  and 
  \[ 
    b_{pq}=2pq \begin{cases}
      C_{(p+q-2)/2} - C_{p/2} C_{q/2} & \text{if $p$ and $q$ are both even,} \\
      C_{(p+q-2)/2} & \text{if $p$ and $q$ are both odd,} \\
      0 & \text{if $p$ and $q$ have opposite parities}.
    \end{cases}
  \]
  and $C_r$ again denotes the $r$th Catalan number.
  Then for any $f\in C^2(\R^{m-1})$, 
  \[
    \big|\E f(Z)-\E f(\Sigma^{1/2}G)\big|\le
    \frac{\kappa_m(M_1(f)+M_2(f))}{n},
  \] 
  where $\kappa_m$ is a positive constant depending only on $m$
   and $\alpha_1,\ldots, \alpha_m$, and $G$ is a standard
  Gaussian random vector in $\R^{m-1}$.
\end{thm}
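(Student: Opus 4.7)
The plan is to adapt the infinitesimal exchangeable-pair Stein's method strategy from the proof of Theorem \ref{T:traces-of-powers-clt-nonnormal} to the Hermitian setting. I construct a continuous family of exchangeable pairs by letting $H$ be a random antisymmetric operator on $V$ with a rotationally invariant law symmetric under $H \leftrightarrow -H$, independent of $X$, and setting $X_\theta = e^{\theta H} X$. Rotational invariance of $X$ and the symmetry of $H$ make $(X, X_\theta)$ exchangeable, and $W_p^\theta := \tr(X_\theta^p)$ admits Taylor expansions
\[
\E[W_p^\theta - W_p \mid X] = \tfrac{\theta^2}{2}\,a_p(X) + O(\theta^3), \qquad \E[(W_p^\theta - W_p)(W_q^\theta - W_q) \mid X] = \theta^2\,b_{pq}(X) + O(\theta^3),
\]
in which the odd-order terms vanish by the $H \leftrightarrow -H$ symmetry and $a_p$, $b_{pq}$ arise by differentiating twice, applying cyclicity of the trace, and averaging over $H$.

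The main algebraic step is evaluating $a_p(X)$ and $b_{pq}(X)$. After Taylor expansion they reduce to sums over an orthonormal basis $\{H_\alpha\}$ of the Lie algebra of isometries of $V$ of the form $\sum_\alpha \tr(H_\alpha X H_\alpha X^{p-1})$, $\sum_\alpha \tr(H_\alpha^2 X^p)$, and $\sum_\alpha \tr(H_\alpha X^p)\tr(H_\alpha X^q)$. Standard ``swap'' identities associated with the rotation group of $V$ collapse these into linear combinations of $W_{p+q-2}$, $\|X\|^2 W_{p+q-2}$, $W_j W_k$, and related terms, with coefficients depending on the symmetry class of $V$; the Hermitian property $X = X^*$ simplifies the bookkeeping since all traces of powers are real. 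I expect the outcome to be an identity of the form
\[
a_p(X) = -p\,W_p + p \sum_{\substack{1 \le q \le p-2\\ p-q\text{ even}}} 2 C_{(p-q-2)/2}\,\frac{\|X\|^2}{n}\,W_q + O(1),
\]
together with a similar formula for $b_{pq}(X)$ whose leading $n$-scale piece agrees with the matrix $B$. Taking total expectations in this identity and using $\E W_{2r} = nC_r + O(1)$ (Theorem \ref{T:traces-of-powers-means}) along with the Catalan recursion verifies consistency with $\E[a_p(X)] = 0$.

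The $\|X\|^2 W_q = (n + Y_2) W_q$ factor in the drift is the key source of the difficulty specific to the Hermitian case: the $Y_2 W_q$ piece is dominated by $Y_2\,\E W_q = O(n)\cdot Y_2$, a non-Gaussian fluctuation of order $n$ that must be absorbed before the Stein method will converge. The centering $Z_p = W_p - \E W_p - \frac{p\,\E W_p}{2n} Y_2$ is chosen precisely to cancel this contribution, and matching coefficients shows that the drift of $Z$ closes: $\frac{1}{\theta^2}\E[Z_\theta - Z \mid X]$ is asymptotically $-A Z$ and $\frac{1}{\theta^2}\E[(Z_\theta - Z)(Z_\theta - Z)^T \mid X]$ is asymptotically $2B$, with remainders controlled under the strengthened hypothesis $t_k(X) \le \alpha_k/n^2$. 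The multivariate infinitesimal Stein's method theorem (as in \cite{EM-linear,ChMe,DoSt}) then yields the $C^2$-bound $\kappa_m(M_1(f) + M_2(f))/n$. The main obstacles are, first, carrying out the swap-sum calculation in the Hermitian cases: the basis $\{B_\alpha\}$ mixes real and imaginary components and includes both diagonal and off-diagonal elements, producing many cross-terms that must either vanish by symmetry or assemble exactly into the Catalan pattern; and, second, controlling the error in replacing $\|X\|^2$ by $n$ wherever $Y_2$ is not explicitly tracked, which is exactly what the $1/n^2$ concentration hypothesis is tuned to deliver.
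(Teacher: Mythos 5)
Your proposal is correct and follows essentially the same route as the paper: an infinitesimal exchangeable pair given by a small random rotation of matrix space (the paper uses $U R_\epsilon U^T$ with $U$ Haar on $\Orthogonal{d}$ rather than $e^{\theta H}$, an immaterial difference), reduction of the drift and diffusion to swap sums over the orthonormal basis (here $\sum_\alpha B_\alpha A B_\alpha = \tr(A)I$, yielding the quadratic drift $W_2 \sum_\ell W_\ell W_{p-2-\ell}$), linearization plus the $Y_2$-recentering --- which works because the rotation preserves $\norm{X}$, so $Z_{\epsilon,p}-Z_p = W_{\epsilon,p}-W_p$ --- and the abstract multivariate Stein bound. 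You correctly identify both the source of the Hermitian-case difficulty and the role of the $t_k(X) \le \alpha_k/n^2$ hypothesis in controlling the resulting errors.
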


It is not obvious from the form given in the statement of Theorem
\ref{T:traces-of-powers-clt-normal} that the covariance matrix
$\Sigma$ is symmetric, let alone positive semidefinite. It will,
however, follow from the proof of Theorem
\ref{T:traces-of-powers-clt-normal} that this is indeed the case.

Theorem \ref{T:traces-of-powers-clt-normal} immediately implies a
multivariate central limit theorem for traces of \emph{odd} powers of
$X$.  It also implies a central limit theorem for traces of powers
other than $2$ if $X$ is uniformly distributed on the sphere of radius
$\sqrt{n}$ in $\symmat{n}{\C}$ or $\symmat{n}{\R}$, or more generally
if $\E \abs{Y_2} = o(1)$.  In either of those two situations, one can
deduce a result analogous to Corollary
\ref{T:polynomial-clt-nonnormal}, although with more complicated
expressions both for the means (as seen in Theorem
\ref{T:traces-of-powers-means}) and for the covariance; for brevity we
omit a precise statement here.  Analogous results for Wigner matrices,
in various levels of generality, were proved in
\cite{Johansson2,LyPa,Shcherbina,SiSo}.
 
Rotationally invariant ensembles of antihermitian matrices reduce to
the Hermitian case: if $X$ is a rotationally invariant Hermitian
random matrix, then $iX$ is a rotationally invariant antihermitian
matrix, and in particular $\tr (iX)^p = i^p \tr (X^p)$.  A version of
Theorem \ref{T:traces-of-powers-clt-normal} for antihermitian matrices
is therefore a formal consequence of Theorem
\ref{T:traces-of-powers-clt-normal} itself.  The explicit statement
will be somewhat complicated, however, since the random vector $Z$
will be distributed in a particular $(m-1)$-dimensional real subspace
of $\C^{m-1}$.

In contrast, the case of real antisymmetric matrices requires an
independent analysis.  Note in particular that if $X \in
\asymmat{n}{\R}$ then $\tr (X^p) = 0$ for every odd $p$.  We have the
following result for such random matrices.

\begin{thm}\label{T:real-asymm-clt}
  Let $X$ be a random matrix in $V = \asymmat{n}{\R}$ 
  whose distribution is invariant under rotations of $V$.

  Suppose that $\E\|X\|^2=n$ and that for each $k$, there is a
  constant $\alpha_k$ depending only on $k$ such that
  \[t_k(X)=\left|n^{-k/2}\E\|X\|^{k}-1\right|\le\frac{\alpha_k}{n^2}.\]   
  Let
  $m \ge 4$ be even and
  \[
    W=(W_4,W_6,\ldots,W_m)=(\tr(X^4),\tr(X^6),\ldots,\tr(X^m)).
  \]
  Then for each even $p$, 
  \[
    \E W_p= (-1)^{p/2} n C_{p/2}+O(1),
  \]
  where $C_{p/2}$ is the $p/2$ Catalan number.  
  
  Let $Y_2=\|X\|^2-n$ and define 
  \[
    Z=(Z_4,Z_6,\ldots,Z_m)\qquad\qquad Z_p=W_p-\E W_p-(-1)^{p/2}\frac{p\E
      W_p}{2n}Y_2.
  \]
  Let $\Sigma=A^{-1}B,$ where $A$ and $B$ have entries (for $p,q \ge 4$
  even)
  \[
    a_{pq}=
    \begin{cases}
      (-1)^{(p-q)/2}2 C_{(p-2-q)/2} & \text{if } 4\le q\le p-2,\\
      1 & \text{if } q=p, \\
      0 &\mbox{otherwise},\end{cases}
  \]
  and 
  \[
    b_{pq} = (-1)^{(p+q)/2} q (C_{(p+q-2)/2} - C_{p/2} C_{q/2}).
  \]
  
  Then for any $f\in C^2(\R^{(m-2)/2})$, 
  \[\big|\E f(Z)-\E f(\Sigma^{1/2}G)\big|\le
    \frac{\kappa_m(M_1(f) + M_2(f))}{n},\] where $\kappa_m$ is a
  positive constant depending only on $m$ and
  $\alpha_1,\ldots, \alpha_m$, and $G$ is a standard Gaussian random
  vector in $\R^{(m-2)/2}$.
\end{thm}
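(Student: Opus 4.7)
The mean formula for $\E W_p$ is just the $V = \asymmat{n}{\R}$ case of Theorem \ref{T:traces-of-powers-means}, so that part of the statement requires no new work. The CLT bound I would prove by the same infinitesimal exchangeable-pair strategy that underlies Theorem \ref{T:traces-of-powers-clt-normal}.

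Concretely, fix an independent copy $X'$ of $X$ and set $X_t = \cos(t) X + \sin(t) X'$. Rotational invariance of the law of $X$ on $\asymmat{n}{\R}$ makes $(X, X_t)$ an exchangeable pair for every $t \ge 0$. Expanding $W_p(X_t) = \tr(X_t^p)$ to second order in $t$ and conditioning on $X$ produces, in the $t \to 0$ limit, a drift proportional to $\mathcal{L} W_p(X)$ and a quadratic variation proportional to a gradient inner product, where $\mathcal{L}$ is the Laplace-type operator built from the orthonormal basis $\{G_{jk}\}_{1 \le j < k \le n}$ of $\asymmat{n}{\R}$. Evaluating these conditional expectations reduces to computing basis sums of the form $\sum_{j<k} \tr(X^a G_{jk} X^b G_{jk})$ with $a+b=p-2$. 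These simplify using $G_{jk}^T = -G_{jk}$ together with the antisymmetric analogue of a Casimir-type identity,
\[
\sum_{1 \le j<k \le n} G_{jk} M G_{jk} = \tfrac{1}{2}\bigl(\tr(M) I - M^T\bigr)
\qquad\text{for any } M \in \mat{n}{\R},
\]
which is the key combinatorial ingredient replacing the symmetric-basis identity used for $\symmat{n}{\R}$ in the Hermitian case.

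The next step is to extract from $\mathcal{L} W_p$ the linear structure matching the matrix $A$ in the theorem. I expect an identity of the shape
\[
\mathcal{L} W_p = -p W_p + 2p \sum_{\substack{4 \le q \le p-2 \\ q \text{ even}}} (-1)^{(p-q)/2} C_{(p-2-q)/2}\, W_q + c_p \|X\|^2 + R_p,
\]
where the $c_p\|X\|^2$ term arises from the $q=2$ contribution (recalling that $W_2 = \tr(X^2) = -\|X\|^2$ for $X \in \asymmat{n}{\R}$), the sign pattern $(-1)^{(p-q)/2}$ is forced by tracking $G_{jk}^T = -G_{jk}$ through the cyclic structure of the trace, and $R_p$ is a remainder controlled by the stronger hypothesis $t_k(X) \le \alpha_k/n^2$. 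Writing $\|X\|^2 = n + Y_2$ and absorbing the deterministic part $n c_p$ into the mean $\E W_p$ produces exactly the prescribed centering $Z_p = W_p - \E W_p - (-1)^{p/2} \frac{p\E W_p}{2n} Y_2$, and identifies the linear system $-AZ$ on the right. The quadratic-variation computation reduces similarly, by cyclic rearrangement and the same Casimir identity, to Catalan-weighted sums of traces, and yields the entries $b_{pq} = (-1)^{(p+q)/2} q (C_{(p+q-2)/2} - C_{p/2} C_{q/2})$, with the systematic sign $(-1)^{(p+q)/2}$ again coming from the substitution $W_2 = -\|X\|^2$.

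With the drift $-AZ$ and covariance $B$ in hand, the multivariate infinitesimal Stein method for exchangeable pairs, exactly as used in the proof of Theorem \ref{T:traces-of-powers-clt-normal}, yields the bound $|\E f(Z) - \E f(\Sigma^{1/2} G)| \le \kappa_m(M_1(f)+M_2(f))/n$ with $\Sigma = A^{-1} B$. Positive semidefiniteness of $\Sigma$ will be manifest from the argument since $B$ emerges as a genuine conditional covariance. The main obstacle I anticipate is the combinatorial bookkeeping of the previous paragraph: deciding which contributions to the expansion of $\tr(X^a G_{jk} X^b G_{jk})$ feed into the linear-in-$W_q$ drift, which into the quadratic-in-$W$ covariance, and which are genuine higher-order remainders, and then verifying that the resulting coefficients collapse exactly onto the sign-decorated Catalan pattern for $A$ and $B$. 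Control of the $O(1/n)$ remainder uses the strengthened hypothesis $t_k(X) \le \alpha_k/n^2$ in the same way as Theorem \ref{T:traces-of-powers-clt-normal}, absorbing error terms that are a priori of order $1$ (rather than $1/n$) because $\E W_p$ is of order $n$.
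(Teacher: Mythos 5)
Your overall strategy---an infinitesimal exchangeable pair, a Casimir-type identity for the basis $\{G_{jk}\}$, and the same Stein machinery as in Theorem \ref{T:traces-of-powers-clt-normal}---is exactly the ``variation on the same theme'' the paper has in mind (the paper omits this proof explicitly). But your construction of the exchangeable pair is wrong for the class of distributions in the theorem. The pair $X_t = \cos(t)X + \sin(t)X'$ with $X'$ an independent copy is exchangeable only when the \emph{joint} law of $(X,X')$ is invariant under the $2\times 2$ rotation mixing the two copies, which forces $X$ to be Gaussian. For a general rotationally invariant $X$ (e.g.\ uniform on the sphere of radius $\sqrt{n}$ in $\asymmat{n}{\R}$), $\|X_t\|^2 = n + 2\cos t \sin t \inprod{X}{X'} \neq \|X\|^2$, so $X_t$ does not even have the same distribution as $X$, let alone form an exchangeable pair with it. The construction that works --- and the one the paper's framework in Section \ref{S:pair} is built on --- applies a small \emph{random rotation of the space $V$ itself} to the single matrix $X$: one sets $X_\epsilon = (UR_\epsilon U^T)(X)$ with $U$ Haar-distributed on $\Orthogonal{d}$, $d = \dim \asymmat{n}{\R}$, which is exchangeable with $X$ precisely because of the assumed rotational invariance, and whose conditional moments are supplied by Lemma \ref{T:limits_all_spaces}. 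Your drift operator $\mathcal{L}$ would have to be derived from that pair, not from Gaussian interpolation.

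Separately, your key combinatorial identity has the wrong sign: with $G_{jk} = \frac{1}{\sqrt 2}(E_{jk}-E_{kj})$ one computes $\sum_{j,k} E_{jk}ME_{jk} = M^T$ and $\sum_{j,k}E_{jk}ME_{kj} = \tr(M)I$, whence
\[
\sum_{1\le j<k\le n} G_{jk} M G_{jk} = \tfrac{1}{2}\bigl(M^T - \tr(M)\,I\bigr),
\]
the negative of what you wrote. In this theorem the entire content beyond Theorem \ref{T:traces-of-powers-clt-normal} is the sign bookkeeping (the factors $(-1)^{(p-q)/2}$ in $A$ and $(-1)^{(p+q)/2}$ in $B$, together with $W_2 = -\|X\|^2$, which you do use correctly), so this sign error would propagate into incorrect entries for $A$ and $B$. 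Once the pair is fixed and the identity corrected, feeding $M = X^{p-2-\ell}$ into Lemma \ref{T:limits_all_spaces}\,(\ref{P:lin}) gives $\tr(X^{\ell}\sum_\alpha B_\alpha X^{p-2-\ell}B_\alpha) = \frac{1}{2}[(-1)^{\ell}W_{p-2} - W_\ell W_{p-2-\ell}]$ for $p$ even, and the rest of your outline (recentering by $Y_2$, Catalan recursion for the means, $\Sigma = A^{-1}B$ from the Stein bound) goes through as in Sections \ref{S:Hermitian}--\ref{S:symmetric}.
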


To our knowledge, the only previous paper whose results explicitly
include a central limit theorem for linear eigenvalue statistics of
real antisymmetric random matrices is \cite{ORoRe}, although the
methods of most previous works on Hermitian random matrices could
presumably be adapted to cover the real antisymmetric case as well.

Our results are proved using a general Gaussian approximation theorem
for exchangeable pairs \cite{EM-Stein-multi,DoSt}.  In section
\ref{S:pair} below we state the general approximation theorem, define
the exchangeable pair for an arbitrary matrix subspace $V$, and carry
out as much of the analysis as possible without specifying $V$; this
may be characterized as the essentially ``algebraic'' part of our
proofs.  The remaining sections carry out the ``asymptotic'' part of
the argument, on a case-by-case basis, for each of the subspaces $V$
considered here.  In sections \ref{S:nonsym-c} and \ref{S:nonsym-r} we
prove Theorems \ref{T:traces-of-powers-means} and
\ref{T:traces-of-powers-clt-nonnormal} for the cases of
$V = \mat{n}{\C}$ and $\mat{n}{\R}$, respectively.  In section
\ref{S:Hermitian} we prove Theorems \ref{T:traces-of-powers-means} and
\ref{T:traces-of-powers-clt-normal} for $V = \symmat{n}{\C}$.  In
section \ref{S:symmetric} we indicate how to modify the proofs of
section \ref{S:Hermitian} for $V = \symmat{n}{\R}$.  The proof of
Theorem \ref{T:real-asymm-clt} is yet another variation on the same
theme, and is omitted.

\section{Common framework: The exchangeable pair}\label{S:pair}

As discussed in the introduction, the proof of Theorem
\ref{T:traces-of-powers-means} is essentially a by-product of the
proofs of Theorems \ref{T:traces-of-powers-clt-nonnormal} and
\ref{T:traces-of-powers-clt-normal}, and so we postpone the proof of
Theorem \ref{T:traces-of-powers-means} for the moment.  The other main
theorems are proved via a version of Stein's method.  The complex form
of the multivariate infinitesimal version of Stein's method of
exchangeable pairs stated below is due to D\"obler and Stolz
\cite{DoSt}, following earlier work of E.\ Meckes
\cite{EM-Stein-multi} in the real case.

\begin{thm}\label{T:inf-abstract}
  Let $W$ be a centered random vector in $\C^m$ and, for each
  $\epsilon\in(0,1)$, suppose that $(W,W_\epsilon)$ is an exchangeable
  pair.  Let $\mathcal{G}$ be a $\sigma$-algebra with respect to which
  $W$ is measurable.  Suppose that there is an invertible matrix
  $\Lambda$, a symmetric, non-negative definite matrix $\Sigma$, a
  $\mathcal{G}$-measurable random vector $E \in \C^m$,
  $\mathcal{G}$-measurable random matrices $E', E'' \in \mat{m}{\C}$,
  and a deterministic function $s(\epsilon)$ such that
\begin{enumerate}
\item \label{inf-lincond}
$$\frac{1}{s(\epsilon)}\E\left[W_\epsilon-W\big|\mathcal{G}\right]\xrightarrow[\epsilon\to0]{L_1}-\Lambda W+E,$$
\item \label{inf-quadcond}
$$\frac{1}{s(\epsilon)}\E\left[(W_\epsilon-W)(W_\epsilon-W)^*\big|\mathcal{G}\right]\xrightarrow[\epsilon\to0]{L_1(\|\cdot\|)}2\Lambda\Sigma+E',$$
\item \label{inf-quadcond2}
$$\frac{1}{s(\epsilon)}\E\left[(W_\epsilon-W)(W_\epsilon-W)^T\big|\mathcal{G}\right]\xrightarrow[\epsilon\to0]{L_1(\|\cdot\|)}E''.$$
\item \label{inf-tricond}
For each $\rho>0$, 
$$\lim_{\epsilon\to0}\frac{1}{s(\epsilon)}\E\left[|W_\epsilon-W|^2\I(|W_\epsilon-W|^2>\rho)\right]=0.$$
\end{enumerate}

Then for $g\in C^2(\C^m)$,
\begin{equation}\begin{split}\label{inf-bd1}
    \big|\E g(W)-\E
    g(\Sigma^{1/2}Z)\big|&\le\|\Lambda^{-1}\|_{op}\left[
      M_1(g)\E|E|+\frac{\sqrt{m}}{4}M_2(g)
      \left(\E\|E'\|+\E\|E''\|\right)\right],
\end{split}\end{equation} 
where $Z$ is a standard complex Gaussian random vector in $\C^m$;
i.e., $Z_j=X_j+iY_j$, where $\{X_1,Y_1,\ldots,X_m,Y_m\}$ are i.i.d.\
$\mathcal{N}\left(0,\tfrac{1}{2}\right)$, and $\abs{E}$
denotes the Euclidean norm of the random vector $E$.
\end{thm}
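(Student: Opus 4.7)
The plan is to approach the theorem via the generator form of Stein's method, which is particularly well adapted to the infinitesimal exchangeable pair setup. Let $\mathcal{L}$ denote the generator of the complex Ornstein--Uhlenbeck process with drift matrix $-\Lambda$ and invariant law $\n_{\C}(0,\Sigma)$; concretely, in Wirtinger calculus, $\mathcal{L} f(w) = \tr\!\bigl(\Lambda \Sigma\,\partial_{\bar{w}}\partial_{w} f\bigr) - \langle \Lambda w, \partial_{\bar{w}} f\rangle - \overline{\langle \Lambda w, \partial_{\bar{w}} \bar f\rangle}$, the second term accounting for a possible holomorphic--holomorphic coupling that vanishes on the invariant measure since $\E[ZZ^{T}]=0$. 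My first step is to produce a solution $U_g$ of the Stein equation $\mathcal{L} U_g = g - \E g(\Sigma^{1/2}Z)$ by the semigroup formula $U_g(w) = -\int_{0}^{\infty}\bigl(P_t g(w) - \E g(\Sigma^{1/2}Z)\bigr)\,dt$, where $P_t$ is the corresponding OU semigroup, and to extract the derivative estimates $M_1(U_g) \le \|\Lambda^{-1}\|_{op}\,M_1(g)$ and $M_2(U_g) \le \tfrac{1}{2}\|\Lambda^{-1}\|_{op}\,M_2(g)$ from the Mehler-type representation of $P_t$, together with the standard contraction inequalities $M_k(P_t g)\le e^{-\lambda_{\min}(\Lambda)t}M_k(g)$ after an appropriate time change.

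The second step is to exploit the exchangeability of $(W,W_\epsilon)$: since the two have the same law, $\E[U_g(W_\epsilon)-U_g(W)]=0$, and a Taylor expansion of $U_g$ at $W$ to second order (valid because $M_2(U_g)<\infty$) gives
\begin{equation*}
0 \;=\; \E\!\bigl[\nabla U_g(W)\cdot(W_\epsilon-W)\bigr] \;+\; \tfrac{1}{2}\E\!\bigl[\hess U_g(W)\bigl[W_\epsilon-W,\,W_\epsilon-W\bigr]\bigr] \;+\; R_\epsilon.
\end{equation*}
After conditioning on $\mathcal{G}$, the first and second derivatives of $U_g$ move outside the inner expectation, and writing the complex Hessian bilinear form as a sum of $\partial_w\partial_{\bar w}$ and $\partial_w\partial_w$ pieces produces two tensor contractions, one against $\E[(W_\epsilon-W)(W_\epsilon-W)^{*}\mid\mathcal{G}]$ and the other against $\E[(W_\epsilon-W)(W_\epsilon-W)^{T}\mid\mathcal{G}]$. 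Dividing by $s(\epsilon)$ and letting $\epsilon\to 0$, assumption (\ref{inf-lincond}) furnishes the limit $-\Lambda W+E$ for the drift term, assumptions (\ref{inf-quadcond}) and (\ref{inf-quadcond2}) supply $2\Lambda\Sigma+E'$ and $E''$ respectively for the two Hessian couplings, and the remainder $R_\epsilon$, which is bounded pointwise by $2 M_2(U_g)|W_\epsilon-W|^{2}$, is controlled using assumption (\ref{inf-tricond}): splitting at level $\rho$, the contribution from $|W_\epsilon-W|^{2}>\rho$ is $o(s(\epsilon))$ by (\ref{inf-tricond}) while the contribution from $|W_\epsilon-W|^{2}\le\rho$ is $O(\rho)$ uniformly in $\epsilon$ and is then sent to $0$ by taking $\rho\to 0$.

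Combining these limits, the main $-\Lambda W$ and $2\Lambda\Sigma$ pieces reassemble by construction into $\E[\mathcal{L} U_g(W)] = \E g(W)-\E g(\Sigma^{1/2}Z)$, and what remains is exactly the error identity
\begin{equation*}
\E g(W)-\E g(\Sigma^{1/2}Z) \;=\; -\,\E[\nabla U_g(W)\cdot E] \;-\; \tfrac{1}{2}\E\!\bigl[\hess U_g(W) : (E'+E'')\bigr].
\end{equation*}
Bounding the first term by $M_1(U_g)\,\E|E|$ via Cauchy--Schwarz and the second by $\tfrac{\sqrt{m}}{2}M_2(U_g)\bigl(\E\|E'\|+\E\|E''\|\bigr)$ via the trace inequality $|\tr(AB)|\le\sqrt{m}\,\|A\|_{op}\,\|B\|$, and then substituting the Step 1 estimates on $M_1(U_g)$ and $M_2(U_g)$, produces exactly \eqref{inf-bd1}. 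The main obstacle is Step 1: the complex, possibly non-self-adjoint OU setting (with a general matrix $\Lambda$ that need not commute with $\Sigma$, and with the non-holomorphic $E''$ coupling that has no analogue in the standard real Gaussian case) requires some care to build the semigroup explicitly and to verify the contractive derivative estimates, so that the $\|\Lambda^{-1}\|_{op}$ prefactor in \eqref{inf-bd1} emerges with the correct constant.
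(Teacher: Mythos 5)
First, a point of orientation: the paper does not prove Theorem \ref{T:inf-abstract} at all --- it imports it from D\"obler--Stolz \cite{DoSt}, which in turn builds on the real case in \cite{EM-Stein-multi}. So the comparison here is with the proof in those references. Your Steps 2 and 3 --- the exchangeability identity $\E[U_g(W_\epsilon)-U_g(W)]=0$, the second-order Taylor expansion with the three Wirtinger blocks coupling to conditions \eqref{inf-lincond}--\eqref{inf-quadcond2}, and the $\rho$-splitting of the remainder using condition \eqref{inf-tricond} --- are sound and essentially what the references do.

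The genuine gap is in Step 1, and you have correctly identified it as the main obstacle without resolving it. The theorem assumes only that $\Lambda$ is invertible; it need not be normal, and its spectrum need not lie in the right half-plane. An Ornstein--Uhlenbeck process with drift $-\Lambda$ and invariant law $\mathcal{N}(0,\Sigma)$ then need not exist (one needs $\Lambda\Sigma+\Sigma\Lambda^*$ to be the diffusion matrix, hence nonnegative, and for the generator to match your Taylor coupling one needs $\Lambda\Sigma$ itself Hermitian nonnegative --- none of which is hypothesized); the contraction $M_k(P_t g)\le e^{-\lambda_{\min}(\Lambda)t}M_k(g)$ is false for non-normal $\Lambda$ (one only gets $M_1(P_tg)\le\norm{e^{-t\Lambda}}_{op}M_1(g)$, and $\int_0^\infty\norm{e^{-t\Lambda}}_{op}\,dt$ can strictly exceed, or be infinite while, $\norm{\Lambda^{-1}}_{op}$ remains finite). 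So the bounds $M_1(U_g)\le\norm{\Lambda^{-1}}_{op}M_1(g)$ and $M_2(U_g)\le\tfrac12\norm{\Lambda^{-1}}_{op}M_2(g)$, on which your final inequality rests, do not follow by this route. The proof in \cite{EM-Stein-multi,DoSt} avoids the $\Lambda$-dependent semigroup entirely: one takes $f=U_o g$ solving the \emph{standard} Stein equation $\inprod{x}{\nabla f}-\inprod{\Sigma}{\hess f}_{HS}=g-\E g(\Sigma^{1/2}Z)$, via $U_og(x)=\int_0^1\frac{1}{2t}\bigl[\E g(\sqrt{t}\,x+\sqrt{1-t}\,\Sigma^{1/2}Z)-\E g(\Sigma^{1/2}Z)\bigr]\,dt$, which has the $\Lambda$-free bounds $M_k(U_og)\le\frac{1}{k}M_k(g)$. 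The matrix $\Lambda^{-1}$ is then introduced not through semigroup decay but through the antisymmetric bivariate function $F(w,w')=(w'-w)^T\Lambda^{-T}\bigl(\nabla f(w')+\nabla f(w)\bigr)$, whose expectation vanishes by exchangeability; expanding $F$ and passing to the limit, the $\Lambda$ from condition \eqref{inf-lincond} and the $\Lambda\Sigma$ from condition \eqref{inf-quadcond} cancel against the inserted $\Lambda^{-T}$ to reproduce exactly the left side of the standard Stein equation, while $\Lambda^{-1}$ survives only in the error terms involving $E$, $E'$, $E''$, yielding the $\norm{\Lambda^{-1}}_{op}$ prefactor with the stated constants $1$ and $\tfrac{\sqrt{m}}{4}$. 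To complete your writeup you should either adopt this device or add hypotheses on $\Lambda$ (normality and positive spectrum) strong enough to justify your semigroup estimates --- but the latter would prove a weaker theorem than the one stated.
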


\noindent \emph {Remarks:} 
\begin{enumerate} 
\item To recover the real case of Theorem \ref{T:inf-abstract}, one
  omits condition (3) and the term $\E \norm{E''}$ in
  \eqref{inf-bd1}.  The real case will be used for all the proofs
  below except for the case of $V = \mat{n}{\C}$.
  
\item In practice, we typically replace condition (4) with
  the formally stronger condition 
  \[
    \lim_{\epsilon\to0}\frac{1}{s(\epsilon)}\E|W_\epsilon-W|^3=0.
  \]
  This condition is trivially satisfied in our applications, since
  $W_\epsilon$ is constructed so that $W_\epsilon-W=\epsilon Y$ for
  some random vector $Y$ with $\E|Y|^3<\infty$.

\end{enumerate}

\bigskip

A parametrized family $(X,X_\epsilon)$ of
exchangeable pairs of random matrices can be constructed as
follows. As above, let $X = \sum_{\alpha=1}^d X_\alpha B_\alpha$,
where $\{X_\alpha\}_{\alpha = 1}^d$ is a random vector in $\R^d$ with a
rotationally invariant distribution and $\{B_\alpha\}_{\alpha = 1}^d$
is an orthonormal basis of a $d$-dimensional subspace $V$ of
$\mat{n}{\C}$. We assume that $\E \norm{X}^2 = n$ and that $\E
\norm{X}^{2m} < \infty$.

For a $d \times d$ matrix $A=\left[a_{jk}\right]_{j,k=1}^d$ in the
orthogonal group $\Orthogonal{d}$, denote by $A(X)$ the transformation
of $X$ given by
\[
A(X)=\sum_{\alpha=1}^d\left(\sum_{\beta=1}^da_{\alpha\beta}X_\beta\right)B_\alpha.
\]

Now fix $\epsilon$, and let
$$R_\epsilon=\begin{bmatrix}\sqrt{1-\epsilon^2}&\epsilon\\-\epsilon&
\sqrt{1-\epsilon^2}\end{bmatrix}\oplus I_{d-2}\in\Orthogonal{d}.$$
That is, $R_\epsilon$ represents a rotation by $\arcsin(\epsilon)$ in
the plane spanned by the first two standard basis vectors of $\R^d$.
Choose $U\in \Orthogonal{d}$ according to Haar measure, independent
of $X$, and let 
$$X_\epsilon=(UR_\epsilon U^T)(X).$$
That is, $X_\epsilon$ is a small random rotation (in matrix space) of the random matrix $X$, and so $(X,X_\epsilon)$ is exchangeable for each $\epsilon$.
For each $p\in\{1,\ldots,m\}$, define 
\[W_{\epsilon,p}:=\tr(X_\epsilon^p);\]
the $m$-dimensional random vectors $(W,W_\epsilon)$ are then exchangeable for each $\epsilon$.

To apply Theorem \ref{T:inf-abstract}, the difference $W_\epsilon-W$
must be expanded in powers of $\epsilon$.  First, 
\[UR_\epsilon U^T=U\left[I_d+\epsilon C\oplus 0_{d-2}+\left(-\frac{\epsilon^2}{2}+O(\epsilon^4)\right)I_2\oplus 0_{d-2}\right]U^T,\]
where $0_{n}$ is the $n\times n$ matrix of all zeroes, $C$ is the $2\times 2$ matrix 
\[C=\begin{bmatrix}0&1\\-1&0\end{bmatrix},\]
and the $O(\epsilon^4)$ is  the deterministic error in replacing $\sqrt{1-\epsilon^2}-1$ by $-\frac{\epsilon^2}{2}$.  Letting $K$ denote the first two columns of $U$ and $Q:=KCK^T$, we have
\[UR_\epsilon U^T=I_d+\epsilon Q+\left(-\frac{\epsilon^2}{2}+O(\epsilon^4)\right)KK^T.\]
It follows that 
\begin{equation}\begin{split}\label{E:diff-expansion}
W_{\epsilon,p}&-W_p\\&=\tr(X_\epsilon^p-X^p)\\&=\tr\left(\left[X+\epsilon Q(X)+\left(-\frac{\epsilon^2}{2}+O(\epsilon^4)\right)KK^T(X)\right]^p-X^p\right)\\&=\epsilon\sum_{j=0}^{p-1}\tr\left(X^j[Q(X)]X^{p-1-j}\right)\\&\qquad+\epsilon^2\left[\sum_{j=0}^{p-2}\sum_{k=0}^{p-2-j}\tr\left(X^j[Q(X)]X^k[Q(X)]X^{p-2-j-k}\right)-\frac{1}{2}\sum_{j=0}^{p-1}\tr\left(X^j[KK^T(X)]X^{p-1-j}\right)\right]+O(\epsilon^3)\\&=\epsilon p\tr(X^{p-1}[Q(X)])\\&\qquad+\epsilon^2\left[\sum_{\ell=0}^{p-2}(\ell+1)\tr\left(X^\ell[Q(X)]X^{p-2-\ell}[Q(X)]\right)-\frac{p}{2}\tr(X^{p-1}[KK^T(X)])\right]+O(\epsilon^3),
\end{split}\end{equation}
where the implied constant in the error $O(\epsilon^3)$ is a random
variable (with all moments finite) depending on $X$ and
$U$. (The $O(\eps^3)$
terms here and below may depend on $\E \norm{X}^{2m}$, and hence are
not necessarily uniform in either $n$ or $m$ without more assumptions
than have been made up to this point.  However, in Theorem
\ref{T:inf-abstract} the limits as $\epsilon \to 0$ are taken with $n$
and $m$ both fixed, so this poses no difficulty.)

Analyzing this expression comes down to integrals over the orthogonal
group $\Orthogonal{d}$ and over the sphere $\mathbb{S}^{d-1}$.  The
following concentration result from \cite{MeSz} plays an important
technical role. Given a
polynomial $Q(x,y)$ in two variables, we refer to a function
$P(X) = Q(X,X^*)$ on $\mat{n}{\C}$ as a $*$-polynomial.

\begin{prop}\label{T:star-poly-concentration}
Let $P$ be a $*$-polynomial of degree at most $p$, and let $X$ be a
random $n\times n$ matrix uniformly distributed in a sphere of radius
$\sqrt{n}$ in a subspace of $\mat{n}{\C}$ of dimension $d \ge c n^2$. Then
\[
\Prob[\abs{\tr P(X) - \E \tr P(X)} \ge t] \le \kappa_p \exp[-c_p \min\{t^2,
nt^{2/p}\}]
\]
and
\[
\norm{\tr P(X) - \E \tr P(X)}_q \le C_p \max\left\{\sqrt{q}, \left(\frac{q}{n}\right)^{p/2}\right\}
\]
for each $q \ge 1$.
Here $\kappa_p, c_p, C_p \ge 0$ are constants depending only on $p$
and $c$, and $\norm{Y}_q = (\E \abs{Y}^q)^{1/q}$ denotes the $L_q$
norm of a random variable.
\end{prop}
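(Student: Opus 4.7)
The plan is to apply concentration of measure on the sphere, using the polynomial structure of $\tr P(X)$ to improve on naïve Lipschitz bounds. Setting $Y = X/\sqrt{n}$, $Y$ is uniform on the unit sphere $S_V \subseteq \mathbb{S}^{d-1}$ of $V$, and $g(Y) := \tr P(\sqrt{n}\, Y)$ is a polynomial of degree at most $p$ in the coordinates of $Y$. The tail bound and the moment bound are equivalent up to $p$-dependent constants: integrating the tail $\exp[-c_p\min\{t^2,nt^{2/p}\}]$ produces $\max\{\sqrt{q},(q/n)^{p/2}\}$, with the two terms arising from the Gaussian and stretched-exponential portions of the tail, respectively. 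I would therefore focus on establishing the moment bound.

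The primary analytic input is the logarithmic Sobolev inequality on $\mathbb{S}^{d-1}$ (with constant of order $d$) and the associated Aida--Stroock moment inequality
\[
  \|h - \E h\|_q \le C \sqrt{q/d}\, \bigl\| |\nabla h| \bigr\|_q, \qquad q \ge 2.
\]
Applied iteratively to $h = g$, then $h = |\nabla g|$, and so on down to $h = |\nabla^p g|$ (which is a constant tensor because $\deg g \le p$), this telescopes into a sum of the form $\sum_{k=1}^{p} (q/d)^{k/2} M_k$, with each $M_k$ controlling $|\nabla^k g|$. The crucial non-trivial step is a sharp estimate for $M_k$: the crude bound $\bigl\| |\nabla^k g| \bigr\|_\infty \sim n^{p/2}$ is far too large, because for $Y$ uniform on the sphere of a subspace of dimension $d \ge c n^2$, the operator norm $\|Y\|_{op}$ is typically of order $n^{-1/2}$ (the ``Ginibre-scaled'' regime), so that $\|Y^{p-k}\|_{HS}$ and hence $|\nabla^k g|$ are typically of size $n^{(k+1)/2}$. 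Inserting $d \asymp n^2$ into the telescoped sum should then collapse it to the target bound $C_p \max\{\sqrt{q},\, (q/n)^{p/2}\}$.

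The main obstacle I anticipate is making the control of the intermediate moments tight enough: a crude iteration of Aida--Stroock leaves intermediate terms of the form $q^{k/2} n^{(1-k)/2}$ for $1 < k < p$, which can exceed both endpoints $\sqrt{q}$ and $(q/n)^{p/2}$ in the range roughly $n \lesssim q \lesssim n^{p/(p-1)}$. Eliminating these losses requires either a decomposition of $g - \E g$ into spherical harmonics on $\mathbb{S}^{d-1}$ and Bonami--Beckner hypercontractivity applied to each homogeneous component separately, or a more refined Herbst-style iteration with carefully chosen $L^s$-exponents at each level. Either approach hinges on the same essential point: the concentration of $\|Y\|_{op}$ forces higher derivatives of $g$ to be much smaller in typical $L^q$ norms than their worst-case $L^\infty$ values, and tracking this through the iteration so that the sum cleanly collapses to its two endpoints is where I expect the bulk of the technical work to lie.
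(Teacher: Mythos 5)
The first thing to note is that the paper contains no proof of this proposition at all: it is quoted verbatim from the reference \cite{MeSz} (``The following concentration result from \cite{MeSz} plays an important technical role''), so the only comparison available is with the argument given there, which runs along completely different and more elementary lines. In \cite{MeSz} only the \emph{first} derivative is used: one observes that $\abs{\nabla \tr P(X)} \le \kappa_p (1+\norm{X}_{op})^{p-2}\norm{X}_{HS}$, so that $\tr P$ is Lipschitz with constant $O_p(\sqrt{n})$ on the high-probability set where $\norm{X}_{op}$ is bounded; L\'evy concentration on a sphere of radius $\sqrt{n}$ in dimension $d \ge cn^2$ then yields the sub-Gaussian regime $e^{-c_pt^2}$ of the tail, while the second regime comes from the concentration of $\norm{X}_{op}$ itself (a $1$-Lipschitz function with mean $O(1)$, hence with tail $e^{-cns^2}$), glued together by a general lemma on concentration for locally Lipschitz functions. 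Your reduction of the tail bound to the moment bound is fine, but your proposed route to the moment bound is a genuinely different, higher-order-derivative approach, and as sketched it does not work.

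The concrete gap is in your size estimate for $\abs{\nabla^k g}$. The Aida--Stroock iteration you describe controls $\norm{h-\E h}_q$ by $\sqrt{q/d}\,\norm{\,\abs{\nabla h}\,}_q$ where $\abs{\cdot}$ is the Euclidean (Hilbert--Schmidt) norm of the relevant tensor, and for that norm the $k$-th derivative is \emph{not} of size $n^{(k+1)/2}$. For example, the contribution to $\nabla^k \tr(Y^p)$ from $(H_1,\dots,H_k)\mapsto \tr(H_1\cdots H_k Y^{p-k})$ has Hilbert--Schmidt norm exactly $n^{(k-1)/2}\norm{Y^{p-k}}_{HS}$ (when $V=\mat{n}{\R}$ or $\mat{n}{\C}$), which is typically of order $n^{(k-1)/2}\cdot n^{(1+k-p)/2}$; after the rescaling by $n^{p/2}$ this gives $\abs{\nabla^k g}\asymp n^{k}$, not $n^{(k+1)/2}$ --- you have identified $\abs{\nabla^k g}$ with $n^{p/2}\norm{Y^{p-k}}_{HS}$ and dropped the combinatorial factor $n^{(k-1)/2}$ that appears when the matrix $Y^{p-k}$ is promoted to a $k$-tensor. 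Consequently every term of the telescoped sum is $(q/d)^{k/2}\,n^{k}\asymp q^{k/2}$: the powers of $n$ cancel entirely and the iteration yields only $\norm{\tr P(X)-\E\tr P(X)}_q\le C_p q^{p/2}$, far weaker than $C_p\max\{\sqrt{q},(q/n)^{p/2}\}$ for all $q\ll n^{p/(p-1)}$. The smaller ``typical'' sizes you quote correspond to injective-type tensor norms, which the plain log-Sobolev iteration does not see; exploiting them would require the full Adamczak--Wolff machinery, and even then you would still face the intermediate-$k$ obstruction that you yourself flag and leave unresolved. Neither proposed remedy is carried out, and hypercontractivity applied to the top spherical-harmonic component returns $q^{p/2}\norm{g_p}_2$, again missing the needed factor $n^{-p/2}$. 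So the proposal is not a proof, and the route chosen is substantially harder than the first-order truncation argument of the cited source.
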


The following lemma is key  in applying Theorem \ref{T:inf-abstract}.

\begin{lemma}\label{T:limits_all_spaces} For $W$ as above and $p$
  fixed, 
\begin{enumerate}
\item \label{P:lin}\begin{equation*}\begin{split}
\lim_{\epsilon\to0}\frac{1}{\epsilon^2}&\E\left[W_{\epsilon,p}-W_p\big|X\right]\\
&=\sum_{\ell=0}^{p-2}\frac{2(\ell+1)
    \|X\|^2}{d(d-1)}\tr\left(X^\ell\sum_\alpha B_\alpha X^{p-2-\ell}B_\alpha\right)-\frac{p(p+d-2)}{d(d-1)}W_p,\end{split}\end{equation*}

\item \label{P:quad_bar}\begin{equation*}\begin{split}\lim_{\epsilon\to0}&\frac{1}{\epsilon^2}\E\left[(W_\epsilon-W)_p\overline{(W_\epsilon-W)_q}|X\right]\\&=\frac{2
        pq}{d(d-1)}\left[\|X\|^2\sum_{\alpha=1}^d
        \tr(X^{p-1}B_\alpha)\overline{\tr(X^{q-1}B_\alpha)}-\sum_{\alpha,\beta=1}^dX_\alpha
        X_\beta\tr(X^{p-1}B_\alpha)\overline{\tr(X^{q-1}B_\beta)}\right],\end{split}\end{equation*}
\item \label{P:quad_no_bar}\begin{equation*}\begin{split}\lim_{\epsilon\to0}&\frac{1}{\epsilon^2}\E\left[(W_\epsilon-W)_p(W_\epsilon-W)_q|X\right]\\&=\frac{2
        pq}{d(d-1)}\left[\|X\|^2\sum_{\alpha=1}^d
        \tr(X^{p-1}B_\alpha)\tr(X^{q-1}B_\alpha)-\sum_{\alpha,\beta=1}^dX_\alpha
        X_\beta\tr(X^{p-1}B_\alpha)\tr(X^{q-1}B_\beta)\right],\end{split}\end{equation*}
\item \label{P:cube}\[\lim_{\epsilon\to0}\frac{1}{\epsilon^2}\E|W_\epsilon-W|^3=0.\]

\end{enumerate}
In each case, the convergence is in the $L_1$ sense.
\end{lemma}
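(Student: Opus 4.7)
The plan is to take \eqref{E:diff-expansion} and compute each conditional expectation by integrating over $U$ (which is Haar on $\Orthogonal{d}$ and independent of $X$); the expansion reduces every claim to evaluating at most two Haar averages, namely $\E[KK^T]$ and $\E[Q_{\alpha\beta}Q_{\gamma\delta}]$. For the first, $KK^T$ is an $\Orthogonal{d}$-invariantly distributed rank-$2$ orthogonal projection, so $\E[KK^T]=\tfrac{2}{d}I_d$. For the second, I would exploit that the distribution of $Q$ is invariant under $Q\mapsto OQO^T$ for any $O\in\Orthogonal{d}$ (replace $U$ by $OU$) and antisymmetric ($Q^T=-Q$); the space of $\Orthogonal{d}$-invariant 4-tensors that are antisymmetric in $\alpha\leftrightarrow\beta$ and in $\gamma\leftrightarrow\delta$ is one-dimensional, spanned by $\delta_{\alpha\gamma}\delta_{\beta\delta}-\delta_{\alpha\delta}\delta_{\beta\gamma}$. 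Matching against $\sum_{\alpha,\beta}\E Q_{\alpha\beta}^2=\E\tr(QQ^T)=\tr(CC^T)=2$ forces
\[
\E[Q_{\alpha\beta}Q_{\gamma\delta}]=\frac{2}{d(d-1)}\bigl(\delta_{\alpha\gamma}\delta_{\beta\delta}-\delta_{\alpha\delta}\delta_{\beta\gamma}\bigr).
\]
A parallel sign-flip symmetry gives $\E Q=0$.

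Next I would substitute these averages back into \eqref{E:diff-expansion}. For \eqref{P:lin}, the order-$\epsilon$ term drops out since $\E Q=0$. For the order-$\epsilon^2$ terms, expanding $Q(X)=\sum_{\alpha,\beta}Q_{\alpha\beta}X_\beta B_\alpha$ and applying the two-tensor formula turns $\E\tr(X^\ell Q(X)X^{p-2-\ell}Q(X))$ into $\frac{2}{d(d-1)}[\|X\|^2\sum_\alpha \tr(X^\ell B_\alpha X^{p-2-\ell}B_\alpha)-W_p]$, because the second (``crossed'') index contraction collapses via $\sum_\alpha X_\alpha B_\alpha=X$ to $\tr(X^p)=W_p$. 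The $KK^T$ piece contributes $-\tfrac{p}{d}W_p$. Summing the $(\ell+1)$ weights and using $\sum_{\ell=0}^{p-2}(\ell+1)=\binom{p}{2}$ combines $-\tfrac{p(p-1)}{d(d-1)}W_p-\tfrac{p(d-1)}{d(d-1)}W_p$ into the claimed $-\tfrac{p(p+d-2)}{d(d-1)}W_p$. For \eqref{P:quad_bar} and \eqref{P:quad_no_bar}, the only $\epsilon^2$ contribution to $(W_\epsilon-W)_p(W_\epsilon-W)_q$ comes from the product of the two order-$\epsilon$ terms $\epsilon p\tr(X^{p-1}Q(X))$ and $\epsilon q\tr(X^{q-1}Q(X))$ (or its complex-conjugated form for \eqref{P:quad_bar}); the same contraction of the two-tensor formula produces the stated expression, with the ``crossed'' term now giving $\sum_{\alpha,\beta}X_\alpha X_\beta\tr(X^{p-1}B_\alpha)\tr(X^{q-1}B_\beta)$ (and likewise with the bar).

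For \eqref{P:cube} and for the $L_1$ sense of convergence throughout, the remainder $O(\epsilon^3)$ in \eqref{E:diff-expansion} is deterministically bounded by $\epsilon^3$ times a polynomial in $\norm{X}$ (since $\|U\|_{op}=1$ and the basis elements $B_\alpha$ are fixed); the hypothesis $\E\norm{X}^{2m}<\infty$ then makes the remainder integrable and lets one apply the dominated convergence theorem to pass from the pointwise limit of $\tfrac{1}{\epsilon^2}\E[\,\cdot\,|X]$ to $L_1$ convergence. For \eqref{P:cube}, the same dominated bound gives $\E|W_\epsilon-W|^3=O(\epsilon^3)$, and dividing by $\epsilon^2$ sends this to $0$.

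The main obstacle is the identification of $\E[Q_{\alpha\beta}Q_{\gamma\delta}]$ and the careful indexing when one contracts that 4-tensor against the basis expansions of the various $\tr(X^jQ(X)X^kQ(X))$ and $\tr(X^{p-1}Q(X))\tr(X^{q-1}Q(X))$; once these are in hand, everything else is bookkeeping. Note that the argument is uniform across all choices of $V$: the subspace enters only through the orthonormal basis $\{B_\alpha\}$ used when expanding $Q(X)$ and $KK^T(X)$, matching the ``algebraic'' role of this lemma described in the introduction.
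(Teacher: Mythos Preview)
Your proposal is correct and follows essentially the same approach as the paper: expand via \eqref{E:diff-expansion}, use $\E[KK^T]=\tfrac{2}{d}I_d$, $\E Q=0$, and the second-moment formula $\E[Q_{\alpha\beta}Q_{\gamma\delta}]=\tfrac{2}{d(d-1)}(\delta_{\alpha\gamma}\delta_{\beta\delta}-\delta_{\alpha\delta}\delta_{\beta\gamma})$, then contract against the basis expansions exactly as you describe. The only stylistic difference is that the paper derives the $Q$-moment formula from the explicit expression $Q_{\alpha\beta}=u_{\alpha 1}u_{\beta 2}-u_{\alpha 2}u_{\beta 1}$ (citing \cite{ChMe}), whereas you obtain it by the $\Orthogonal{d}$-invariance/antisymmetry argument; both are equally short and valid.
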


\begin{proof}
By the expansion of $W_{\epsilon,p}-W_p$ in powers of $\epsilon$ given in \eqref{E:diff-expansion}, it follows from the independence of $X$ and $U$ that
\begin{equation*}\begin{split}
\E\left[W_{\epsilon,p}-W_p\big|X\right]
&=\epsilon p\tr(X^{p-1}\E\left[Q(X)\big|X\right])\\
&\quad+\epsilon^2\left[\sum_{\ell=0}^{p-2}(\ell+1)\E\left[\left.\tr\left(X^\ell[Q(X)]X^{p-2-\ell}[Q(X)]\right)\right|X\right]\right.\\
&\qquad \qquad \left.\phantom{\sum_{\ell=0}^{p-2}}
-\frac{p}{2}\tr\left(X^{p-1}\E\left[KK^T(X)\big|X\right]\right)\right]+O(\epsilon^3),
\end{split}\end{equation*}
where here and in what follows, the implied constants in the error
term are random but bounded in $L_1$.

The entries of $KK^T$ and $Q$ are given in terms of the entries of $U=[u_{jk}]_{jk=1}^d$ by 
\[
[KK^T]_{jk}=u_{j1}u_{k1}+u_{j2}u_{k2},
\qquad\qquad[Q]_{jk}=u_{j1}u_{k2}-u_{j2}u_{k1}.
\]
From this it is easy to see that 
\[
\E[KK^T]=\frac{2}{d}I_d,\qquad\qquad\E[Q]=0,
\]
and thus
\begin{equation}\begin{split}\label{E:diff1}\E&\left[W_{\epsilon,p}-W_p\big|X\right]=\epsilon^2\left[\sum_{\ell=0}^{p-2}(\ell+1)\E\left[\left.\tr\left(X^\ell[Q(X)]X^{p-2-\ell}[Q(X)]\right)\right|X\right]-\frac{p}{d}W_p\right]+O(\epsilon^3).\end{split}\end{equation}

Now,
\[\E\left[\left.\tr\left(X^\ell[Q(X)]X^{p-2-\ell}[Q(X)]\right)\right|X\right]=\tr\left(X^\ell\E\left[\left.[Q(X)]X^{p-2-\ell}[Q(X)]\right|X\right]\right).\]
For notational convenience, write $A:=X^{p-2-\ell}$. If $q_{\alpha\beta}$ denotes the $(\alpha,\beta)$ entry of $Q$, then by expanding in the basis $\{B_j\}$,
\[[Q(X)]A[Q(X)]=\sum_{\alpha,\beta,\gamma,\delta=1}^dq_{\alpha\beta}q_{\gamma\delta}X_\beta
  X_\delta B_\alpha AB_\gamma\]
and so
\[\E\left[\left.[Q(X)]X^{p-2-\ell}[Q(X)]\right|X\right]=\sum_{\alpha,\beta,\gamma,\delta=1}^d\E\left[q_{\alpha\beta}q_{\gamma\delta}\right]X_\beta
  X_\delta B_\alpha AB_\gamma.\]
The formulae above for $q_{\alpha\beta}$ in terms of the entries of $U$ can be used to derive the following (see Lemma 9 of \cite{ChMe})
\begin{equation}\label{E:qmoments}\E\left[q_{\alpha\beta}q_{\gamma\delta}\right]=\frac{2}{d(d-1)}\left[\delta_{\alpha\gamma}\delta_{\beta\delta}-\delta_{\alpha\delta}\delta_{\beta\gamma}\right],\end{equation}
and so
\begin{equation*}\begin{split}
\E\left[\left.[Q(X)]X^{p-2-\ell}[Q(X)]\right|X\right]&=\frac{2}{d(d-1)}\left[\sum_{\alpha,\beta}X_\beta^2B_\alpha
  AB_\alpha-\sum_{\alpha,\beta}X_\alpha X_\beta B_\alpha
  AB_\beta\right]\\&=\frac{2}{d(d-1)}\left[\|X\|^2\sum_{\alpha}B_\alpha
  AB_\alpha -XAX\right].
\end{split}\end{equation*}

It thus follows from \eqref{E:diff1} that 
\begin{equation*}\begin{split}
\E&\left[W_{\epsilon,p}-W_p\big|X\right]\\&=\epsilon^2\left[\sum_{\ell=0}^{p-2}\frac{2(\ell+1)}{d(d-1)}\left(\|X\|^2\tr\left(X^\ell\sum_\alpha
      B_\alpha X^{p-2-\ell}B_\alpha\right)-W_p\right)-\frac{p}{d}W_p\right]+O(\epsilon^3)
\\&=\epsilon^2\left[\sum_{\ell=0}^{p-2}\frac{2(\ell+1)
    \|X\|^2}{d(d-1)}\tr\left(X^\ell\sum_\alpha B_\alpha X^{p-2-\ell}B_\alpha\right)-\frac{p(p+d-2)}{d(d-1)}W_p\right]+O(\epsilon^3),
\end{split}\end{equation*}
whence the statement of part \ref{P:lin} of the lemma.

For part \ref{P:quad_bar}, again using the expansion of $W_\epsilon-W$ in
\eqref{E:diff-expansion} yields
\begin{equation*}\begin{split}\E&\left[(W_\epsilon-W)_p\overline{(W_\epsilon-W)_q}|X\right]\\&\qquad=\epsilon^2pq\E\left[\left.\tr(X^{p-1}[Q(X)])\overline{\tr(X^{q-1}[Q(X)])}\right|X\right]+O(\epsilon^3)\\&\qquad=\epsilon^2 pq\E\left[\left.\tr\left(\sum_{\alpha,\beta=1}^dq_{\alpha\beta}X_\beta X^{p-1}B_\alpha\right) \overline{\tr\left(\sum_{\gamma,\delta=1}^dq_{\gamma\delta}X_\delta X^{q-1} B_\gamma\right)}\right|X\right]+O(\epsilon^3)\\&\qquad=\epsilon^2 pq\sum_{\alpha,\beta,\gamma,\delta=1}^d\E[q_{\alpha\beta}q_{\gamma\delta}]X_\beta X_\delta\tr(X^{p-1}B_\alpha)\overline{\tr(X^{q-1}B_\gamma)} +O(\epsilon^3).\end{split}\end{equation*}
Making use of the moment formula for $Q$ given in \eqref{E:qmoments}
then gives that
\begin{equation*}\begin{split}\E&\left[(W_\epsilon-W)_p\overline{(W_\epsilon-W)_q}|X\right]\\&=\frac{2\epsilon^2
      pq}{d(d-1)}\left[\sum_{\alpha,\beta=1}^dX_\beta^2
      \tr(X^{p-1}B_\alpha)\overline{\tr(X^{q-1}B_\alpha)}-\sum_{\alpha,\beta=1}^dX_\alpha
      X_\beta\tr(X^{p-1}B_\alpha)\overline{\tr(X^{q-1}B_\beta)}\right]+O(\epsilon^3).\end{split}\end{equation*}

Exactly the same argument for part \ref{P:quad_no_bar} gives that
\begin{equation*}\begin{split}\E&\left[(W_\epsilon-W)_p(W_\epsilon-W)_q|X\right]\\&=\frac{2\epsilon^2
      pq}{d(d-1)}\left[\sum_{\alpha,\beta=1}^dX_\beta^2
      \tr(X^{p-1}B_\alpha)\tr(X^{q-1}B_\alpha)-\sum_{\alpha,\beta=1}^dX_\alpha
      X_\beta\tr(X^{p-1}B_\alpha)\tr(X^{q-1}B_\beta)\right]+O(\epsilon^3).\end{split}\end{equation*} 

Finally, it is clear from the expansion in $\epsilon$ that 
\[\E|W_\epsilon-W|^3=O(\epsilon^3),\]
which completes the proof.
\end{proof}

At this point in the analysis, it is necessary to consider the various
subspaces separately; this is carried out in the following sections.

\section{Rotationally invariant ensembles in $\mat{n}{\C}$}\label{S:nonsym-c}

We begin with the following technical lemma. 

\begin{lemma}\label{T:prelim-nosymm-c}
  Let $X$ be a random matrix in $\mat{n}{\C}$ whose distribution is
  invariant under rotations within $\mat{n}{\C}.$ Suppose that
  $\E\|X\|^2=n$ and that for each $k$, there is a constant $\alpha_k$
  depending only on $k$ such that
  \[
    t_k(X)=\left|n^{-k/2}\E\|X\|^{k}-1\right|\le\frac{\alpha_k}{n}.
  \] 
  Then for $p,q\in\N$,
  \[
    \E\left[\|X\|^2\tr(X^p(X^*)^{q})\right]=\begin{cases}n^2+O(n),&p=q;\\0,&\mbox{otherwise}.\end{cases}
  \]
Here, the implied constant in the $O(n)$ may depend on $p,q,$ and the
constants $\alpha_k$.
\end{lemma}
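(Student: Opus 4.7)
The plan is to separate $\|X\|$ from the direction of $X$ and handle the cases $p=q$ and $p\ne q$ separately. By rotational invariance, write $X=\|X\|\widetilde{X}$ with $\widetilde{X}$ uniform on the unit sphere of $\mat{n}{\C}$ and independent of $\|X\|$, so that
\[
\E\bigl[\|X\|^2\tr(X^p(X^*)^q)\bigr]
=\E\|X\|^{p+q+2}\cdot\E\tr\bigl(\widetilde{X}^p(\widetilde{X}^*)^q\bigr).
\]

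When $p\ne q$: multiplication by $e^{i\theta}$ is a $\C$-linear, hence $\R$-linear, isometry of $\mat{n}{\C}$, so $e^{i\theta}X\stackrel{d}{=}X$ for every $\theta\in\R$.  Since
\[
\tr\bigl((e^{i\theta}X)^p((e^{i\theta}X)^*)^q\bigr)=e^{i(p-q)\theta}\tr(X^p(X^*)^q)
\]
while $\|e^{i\theta}X\|=\|X\|$, taking expectations yields
$\E[\|X\|^2\tr(X^p(X^*)^q)]=e^{i(p-q)\theta}\E[\|X\|^2\tr(X^p(X^*)^q)]$ for every $\theta$. Choosing $\theta$ so that $e^{i(p-q)\theta}\ne 1$ forces the expectation to vanish.

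When $p=q$: the hypothesis on $t_k$ gives $\E\|X\|^{2p+2}=n^{p+1}(1+O(1/n))$, so the claim reduces to proving $\E\tr(\widetilde{X}^p(\widetilde{X}^*)^p)=n^{1-p}(1+O(1/n))$.  To evaluate this spherical moment, I would compare with the complex Ginibre ensemble $G$ (i.i.d.\ entries distributed as $\mathcal{N}_{\C}(0,1/n)$), which is itself rotationally invariant; writing $G=\|G\|\widetilde{G}$ with $\widetilde{G}\stackrel{d}{=}\widetilde{X}$ and $\|G\|$ independent of $\widetilde{G}$,
\[
\E\tr(G^p(G^*)^p)=\E\|G\|^{2p}\cdot\E\tr\bigl(\widetilde{X}^p(\widetilde{X}^*)^p\bigr).
\]
A direct chi-squared calculation (using $2n\|G\|^2\sim\chi^2_{2n^2}$) gives $\E\|G\|^{2p}=n^p(1+O(1/n^2))$, and Wick's formula expresses $\E\tr(G^p(G^*)^p)$ as a sum over the $p!$ matchings of $G$'s with $G^*$'s: the unique non-crossing (``rainbow'') matching contributes $n$ (it produces $p+1$ free indices against the factor $n^{-p}$ from the variances), while every other matching has at most $p-1$ free indices and contributes $O(1/n)$, giving $\E\tr(G^p(G^*)^p)=n+O(1/n)$.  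Solving and combining with the $\|X\|$-moment estimate produces $\E[\|X\|^2\tr(X^p(X^*)^p)]=n^{p+1}(1+O(1/n))\cdot n^{1-p}(1+O(1/n))=n^2+O(n)$.

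The main technical step is the Wick combinatorial argument --- the genus inequality showing that the rainbow is the unique planar matching for the cyclic trace $\tr(G^p(G^*)^p)$, with all other matchings contributing a power of $n$ smaller.  This is standard in random matrix theory, and the implied constants depend only on $p,q$ and $\alpha_{p+q+2}$ as required by the statement of the lemma.
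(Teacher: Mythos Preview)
Your argument is correct. The overall architecture matches the paper's proof exactly: decompose $X$ into its norm and a uniformly distributed direction, dispose of $p\ne q$ by symmetry (the paper says this in one phrase; your $e^{i\theta}$ argument is the explicit version), and then for $p=q$ reduce the question to computing the spherical moment $\E\tr(\widetilde{X}^p(\widetilde{X}^*)^p)$.

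The only real difference is how that spherical moment is evaluated. The paper works directly on the sphere of radius $\sqrt{n}$: it expands $\tr(\widetilde{X}^p(\widetilde{X}^*)^p)$ over index tuples, identifies the dominant index pattern ($i_1,\dots,i_{p+1}$ distinct, with the $X^*$-indices mirroring the $X$-indices), and evaluates that contribution using an explicit formula for polynomial integrals over the sphere, getting $n+O(1)$. You instead pass through the Ginibre ensemble: writing $G=\|G\|\widetilde{G}$ lets you trade the spherical moment for a Gaussian one, which you compute via Wick's formula and the genus inequality. Both routes are standard and yield the same leading term; yours is slightly more indirect but has the advantage of invoking combinatorics that many readers will recognize immediately, while the paper's stays self-contained and avoids introducing an auxiliary ensemble. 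Your estimate $\E\tr(G^p(G^*)^p)=n+O(1/n)$ is in fact a bit sharper than needed (the paper only claims the subleading terms are $O(1)$), and your observation that the rainbow is the unique non-crossing pairing for the word $G^p(G^*)^p$ is correct.
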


\begin{proof}
  For $p\neq q$, $\E\left[\|X\|^2\tr(X^{p}(X^{q})^*)\right]=0$ by
  symmetry.  We suppose from now on that $p=q$.

  By the rotational invariance of $X$, we can write
  $X = \frac{\norm{X}}{\sqrt{n}}\widetilde{X}$, where $\widetilde{X}$ is uniformly distributed
  on the sphere of radius $\sqrt{n}$ in $\mat{n}{\C}$ and $\widetilde{X}$ is
  independent from $\norm{X}$.  We then have
  \begin{align*}
    \E\left[\|X\|^2\tr(X^{p}(X^*)^{p})\right]
    & = \left( \frac{\E \norm{X}^{2p+2}}{n^{p+1}}\right)
      n \E \tr(\widetilde{X}^{p}(\widetilde{X}^*)^{p}) ,
  \end{align*}
  and thus
  \begin{equation} \label{Eq:uniform-to-general}
    \abs{\E\left[\|X\|^2\tr(X^{p}(X^*)^{p})\right] 
    - n \E \tr(\widetilde{X}^{p}(\widetilde{X}^*)^{p}) }
     \le n t_{2p+2}(X) \E \tr(\widetilde{X}^{p}(\widetilde{X}^*)^{p}).
  \end{equation}
  It therefore suffices to prove the lemma under the assumption that
  $X$ is uniformly distributed on the sphere of radius $\sqrt{n}$ in
  $\mat{n}{\C}$; the general case follows from
  \eqref{Eq:uniform-to-general} and the assumption on $t_k(X)$. 
  
  Making this assumption, we now consider the expansion
  \begin{equation}\label{E:Eentry-explicit}
    \E \tr(X^{p}(X^*)^{p})
    = \sum_{i_1,\ldots,i_{2p}}\E\left[x_{i_1i_2}x_{i_2i_3}\cdots
      x_{i_{p}i_{p+1}}\overline{x_{i_{p+2}i_{p+1}}}\cdots
      \overline{x_{i_1i_{2p}}}\right].
  \end{equation}
  By rotational symmetry, a term on the right side of
  \eqref{E:Eentry-explicit} is non-zero only if each $x_{ij}$ appears
  the same number of times as $\overline{x_{ij}}$.  Consider the
  contribution to the sum such that $i_1,\ldots,i_{p+1}$ are distinct,
  and $i_2=i_{2p}$, $i_3=i_{2p-1}$, and so on.  The contribution of
  such terms is 
  \begin{align*}
    n(n-1)\cdots(n-p)
    &\E\left[|x_{11}|^2|x_{12}|^2\cdots|x_{1p}|^2\right]
      = n^p \frac{n (n-1) \cdots (n-p)}{(n^2+p-1) \cdots n^2}
    = n + O(1)
  \end{align*}
  making use of the standard formula for integrating polynomials over
  the sphere (see, e.g., Lemma 14 of \cite{MM-quantum}).
  
  The sum of remaining terms of \eqref{E:Eentry-explicit} is $O(1)$,
  since they necessarily involve the choice of fewer indices from
  $\{1,\ldots,n\}$, while the expectations on the right hand side
  which appear all have the same order in $n$ (this is immediate from
  the formula in \cite{MM-quantum}).  By \eqref{Eq:uniform-to-general}
  this completes the proof of the lemma.
\end{proof}

\begin{proof}[Proof of Theorems \ref{T:traces-of-powers-means} and \ref{T:traces-of-powers-clt-nonnormal} for $V=\mat{n}{\C}$]

Recall that in this context, the orthonormal basis
$\{B_\alpha\}_{\alpha=1}^d$ is $\{E_{jk}\}_{1\le j,k\le n}\cup
\{iE_{jk}\}_{1\le j,k\le n}$.  It follows that for $A\in\mat{n}{\C}$, 
\[\sum_{\alpha=1}^dB_\alpha AB_\alpha=\sum_{j,k=1}^nE_{jk}AE_{jk}+\sum_{j,k=1}^n(iE_{jk})A(iE_{jk})=0.\]

Part \ref{P:lin} of Lemma \ref{T:limits_all_spaces} then implies that
\begin{equation}\label{E:cmat-linearity}
\lim_{\epsilon \to 0}
\frac{1}{\epsilon^2}\E\left[W_{\epsilon,p}-W_p\big|X\right]=-\frac{p(p+d-2)}{d(d-1)}W_p.
\end{equation}

Note that by taking expectations of both sides of Equation
\ref{E:cmat-linearity}, the exchangeability of $(W_p,W_{\epsilon,p})$  implies that $\E W_p=0$ for all $p$; this is
also apparent from symmetry considerations (and hence the
$\mat{n}{\C}$ case of Theorem \ref{T:traces-of-powers-means}).

Equation \ref{E:cmat-linearity}  shows that the matrix $\Lambda$ in the statement of Theorem
\ref{T:inf-abstract} may be taken to be diagonal, with $(p,p)$ entry
given by $\frac{p(p+d-2)}{d(d-1)}$, and that the random vector $E=0$.  In particular, 
\[\norm{\Lambda^{-1}}_{op}=d.\] 

Next, consider part \ref{P:quad_bar} of Lemma \ref{T:limits_all_spaces}.
Let $\inprod{A}{B}_{HS}$ denote the complex Hilbert--Schmidt inner
product $\inprod{A}{B}_{HS}=\tr(AB^*)$. Since
$\{B_\alpha\}_{\alpha=1}^d=\{E_{jk}\}_{j,k=1}^n\cup\{iE_{jk}\}_{j,k=1}^n$, 
\begin{align*}
\sum_\alpha\tr(X^{p-1}B_\alpha)\overline{\tr(X^{q-1}B_\alpha)}
&=2\sum_{j,k=1}^n \tr(X^{p-1}E_{jk})\overline{\tr(X^{q-1}E_{jk})}\\
&=2\sum_{j,k=1}^n \inprod{X^{p-1}}{E_{kj}}_{HS} \overline{\inprod{X^{q-1}}{E_{kj}}_{HS}}\\
&=2\inprod{X^{p-1}}{X^{q-1}}_{HS}\\
&=2\tr(X^{p-1} (X^{q-1})^*),
\end{align*}
where the third equality follows from the fact that $\{E_{kj}\}_{j,k=1}^n$ is an orthonormal basis for the complex inner product $\inprod{\cdot}{\cdot}_{HS}$.
Similarly,
\begin{align*}
\sum_{\alpha=1}^dX_\alpha\tr(X^{p-1}B_\alpha)
&=\sum_{j,k=1}^n\inprod{X}{E_{jk}}\tr(X^{p-1}E_{jk})+\sum_{j,k=1}^n\inprod{X}{iE_{jk}}\tr(X^{p-1}iE_{jk})\\
&=\sum_{j,k=1}^n\left[\inprod{X}{E_{jk}}+i\inprod{X}{i E_{jk}}\right]\tr(X^{p-1}E_{jk}) \\
&=\sum_{j,k=1}^n \inprod{X}{E_{jk}}_{HS} \overline{\inprod{(X^{p-1})^*}{E_{jk}}_{HS}}\\
&= \inprod{X}{(X^{p-1})^*}_{HS} = \tr(X^p).
\end{align*}

It therefore follows from Lemma \ref{T:limits_all_spaces} that 
\begin{equation}\label{E:cnosymm-quadcond1}
\lim_{\epsilon\to 0} \frac{1}{\epsilon^2}
\E[(W_\epsilon-W)_p\overline{(W_\epsilon-W)_q}|X]
=\frac{2pq}{d(d-1)}\left(2\norm{X}^2\tr(X^{p-1}(X^{q-1})^*)-W_p\overline{W_q}\right).
\end{equation}

Note that if $p\neq q$, the expectation of both terms on the right is zero by symmetry.  If $p=q$, then taking expectations of both sides of
\eqref{E:cnosymm-quadcond1} gives that
\begin{align*}
2\E[\|X\|^2\tr(X^{p-1}(X^*)^{p-1})]-\E|W_p|^2&=\lim_{\epsilon\to0}\frac{d(d-1)}{2p^2\epsilon^2}\E[|(W_\epsilon-W)_p|^2]\\&=\lim_{\epsilon\to0}\frac{-d(d-1)}{p^2\epsilon^2}\E[(W_\epsilon-W)_p\overline{W_p}]\\&=\lim_{\epsilon\to0}\frac{-d(d-1)}{p^2\epsilon^2}\E\Big[\E\big[(W_\epsilon-W)_p\big|W\big]\overline{W_p}\Big]\\&=\frac{p+d-2}{p}\E|W_p|^2,
\end{align*}
where the second line follows by exchangeability and the last line
follows from  formula \eqref{E:cmat-linearity} for
$\E[(W_\epsilon-W)_p|W]$.
Since $d=2n^2$, combining this computation with Lemma \ref{T:prelim-nosymm-c} means that 
\begin{equation}
  \label{E:var-bound}
\E|W_p|^2=\frac{2p}{2p+2n^2-2}\E\left[\|X\|^2\tr(X^{p-1}(X^*)^{p-1})\right]=p+O\left(\frac{1}{n}
  \right),
\end{equation}
and then by Equation \eqref{E:cnosymm-quadcond1}, 
\[
\lim_{\epsilon\to 0} \frac{1}{\epsilon^2}
\E[(W_\epsilon-W)_p\overline{(W_\epsilon-W)_q}]
  = \left(\frac{2p^2(p+d-2)}{d(d-1)}+ O\left(\frac{1}{n^3}\right)
  \right)\delta_{pq}.
\]

We define $\Sigma$ to be the diagonal matrix with
$\sigma_{pp}=p$. Taking  $\mathcal{G} = \sigma(X)$ in Theorem \ref{T:inf-abstract}, the random matrix $E'$ then has $(p,q)$ entry
\begin{align*}
[E']_{pq}  & = \frac{2pq}{d(d-1)} \left[ 2 \norm{X}^2 \tr
             \bigl(X^{p-1} (X^{q-1})^*\bigr) - W_p
             \overline{W_q}\right] - \frac{2p^2 (p+d-2)}{d(d-1)}
             \delta_{pq} \\
& =\frac{2pq}{d(d-1)}\Big[2\|X\|^2 \tr(X^{p-1}(X^{q-1})^*)-\E\left[2\|X\|^2\tr(X^{p-1}(X^{q-1})^*)\right]\\
&\qquad \qquad \qquad -W_p\overline{W_q}+\E \left(W_p\overline{W_q}\right)\Bigl]+O\left(\frac{1}{n^3}\right)\delta_{pq}.
\end{align*}
We will estimate the expected Hilbert--Schmidt norm by
\[
  \E \norm{E'} \le \E \sum_{p,q=1}^m \abs{[E']_{pq}}.
\]

We first have
\[
\E\abs{W_p\overline{W_q}-\E \left(W_p\overline{W_q}\right)}\le 2 \E \abs{W_p\overline{W_q}} \le 2 \sqrt{\E
  \abs{W_p}^2} \sqrt{\E \abs{W_q}^2} = pq + O\left(\frac{1}{n}\right)
\]
by the Cauchy--Schwarz inequality and \eqref{E:var-bound}.

As in the proof of Lemma \ref{T:prelim-nosymm-c}, we write
$X = \frac{\norm{X}}{\sqrt{n}}\widetilde{X}$, where $\widetilde{X}$ is uniformly distributed
on the sphere of radius $\sqrt{n}$ in $\mat{n}{\C}$ and $\widetilde{X}$ is
independent from $\norm{X}$. We then have
\begin{align*}
  \E\Big|\|X\|^2
  &\tr(X^{p-1}(X^{p-1})^*)-\E\left[\|X\|^2
    \tr(X^{p-1}(X^{p-1})^*)\right]\Big|\\
  &=n\E\left|\frac{\|X\|^{2p}}{n^{p}}\tr(\widetilde{X}^{p-1}(\widetilde{X}^{p-1})^*)-\left(\E\frac{\|X\|^{2p}}{n^{p}}\right)\E
    \tr(\widetilde{X}^{p-1}(\widetilde{X}^{p-1})^*)\right|\\
  &\le n\E \tr(\widetilde{X}^{p-1}(\widetilde{X}^{p-1})^*) \E\left|\frac{\|X\|^{2p}}{n^{p}}-\E
    \frac{\|X\|^{2p}}{n^{p}}\right| \\
  & \qquad +n\left(\E
    \frac{\|X\|^{2p}}{n^{p}}\right) \E \abs{\tr(\widetilde{X}^{p-1}(\widetilde{X}^{p-1})^*) - \E \tr(\widetilde{X}^{p-1}(\widetilde{X}^{p-1})^*)}\\
  &\le  n \E \tr(\widetilde{X}^{p-1}(\widetilde{X}^{p-1})^*)\left(\sqrt{t_{4p}(X)-2t_{2p}(X)}+ t_{2p} (X) \right) \\
  & \qquad + n (1 + t_{2p}(X)) \E \abs{\tr(\widetilde{X}^{p-1}(\widetilde{X}^{p-1})^*) - \E \tr(\widetilde{X}^{p-1}(\widetilde{X}^{p-1})^*)}. 
\end{align*}
Lemma \ref{T:prelim-nosymm-c} implies that
\[
\E \tr(\widetilde{X}^{p-1}(\widetilde{X}^{p-1})^*)=n+O(1),
\]
and Proposition \ref{T:star-poly-concentration} implies that
\[
\E \abs{\tr(\widetilde{X}^{p-1}(\widetilde{X}^{p-1})^*) - \E \tr(\widetilde{X}^{p-1}(\widetilde{X}^{p-1})^*)} \le \kappa_p.
\]
We therefore have
\begin{align*}\E\Big|\|X\|^2&\tr(X^{p-1}(X^{p-1})^*)-\E\left[\|X\|^2
                              \tr(X^{p-1}(X^{p-1})^*)\right]\Big|\\&\le \kappa
                              n^2 \left(\sqrt{t_{4p}(X)-2t_{2p}(X)}+ t_{2p} (X) \right) +\kappa_pn(1+t_{2p}(X))=O(n).
\end{align*}

Similarly, recalling that when $p\neq q$ the means are $0$,
\begin{align*}
  \E \abs{\norm{X}^2 \tr \bigl(X^{p-1}(X^{q-1})^*\bigr)}
  & = n \E \abs{\frac{\norm{X}^{p+q}}{n^{(p+q)/2}}} \E \abs{\tr
    \bigl(\widetilde{X}^{p-1} (\widetilde{X}^{q-1})^*\bigr)}
    \le \kappa n (1 + t_{p+q}(X)) = O(n).
\end{align*}

Making
use of the fact that $\|\Lambda^{-1}\|_{op}=d=2n^2$, it now follows
that
\[
\|\Lambda^{-1}\|_{op}\E\|E'\|\le \frac{\kappa_m}{n}
\]
for some constant $\kappa_m$ depending only on $m$.

Finally, consider part \ref{P:quad_no_bar} of Lemma \ref{T:limits_all_spaces}.
Observe that
\begin{align*}
\sum_\alpha&\tr(X^{p-1}B_\alpha)\tr(X^{q-1}B_\alpha)\\&=\sum_{j,k=1}^n\tr(X^{p-1}E_{jk})\tr(X^{q-1}E_{jk})-\sum_{j,k=1}^n\tr(X^{p-1}E_{jk})\tr(X^{q-1}E_{jk})=0,
\end{align*}
and from above,
\begin{align*}
\sum_{\alpha=1}^dX_\alpha\tr(X^{p-1}B_\alpha)=\tr(X^p).
\end{align*}
It thus follows from Lemma \ref{T:limits_all_spaces} that 
\[
[E'']_{p,q}=\lim_{\epsilon\to0}\frac{1}{\epsilon^2}\E[(W_\epsilon-W)_p(W_\epsilon-W)q|X]=\frac{2pq}{d(d-1)}W_pW_q.
\]
By the Cauchy--Schwarz inequality and \eqref{E:var-bound},
$\E |W_pW_q|$ is bounded independent of $n$, and so
\[\E\|E''\|\le\frac{\kappa_m}{d(d-1)},\]
where $\kappa_m$ is a constant depending only on $m$. This completes the
proof of Theorem \ref{T:traces-of-powers-clt-nonnormal} in the case of $\mat{n}{\C}$.
\end{proof}

\section{Rotationally invariant ensembles in $\mat{n}{\R}$}\label{S:nonsym-r}

As in the previous section, we begin with a technical lemma.

\begin{lemma}\label{T:traces-of-products}
  Let $X$ be a random matrix in $\mat{n}{\R}$ whose distribution is
  invariant under rotations in $\mat{n}{\R}$.  Suppose that
  $\E\|X\|^2=n$ and that for each $k$, there is a constant $\alpha_k$
  such that
  \[
    t_k(X)=\E\Big|n^{-k/2}\|X\|^k-1\Big|\le\frac{\alpha_k}{n}.
  \] 
  Then for all $p,q\in \N$,
  \[
    \E\left[\|X\|^2\tr(X^p(X^T)^q)\right]=\begin{cases}n^2+O(n)&
      \text{if }p=q,
      \\O(n)& \text{if $p\neq q$ and $p-q$ is even},\\0&
      \text{if $p-q$ is odd};\end{cases}
  \] 
  where the implied constants depend on $p$, $q$, and the $\alpha_k$.
\end{lemma}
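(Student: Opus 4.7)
The plan mirrors the proof of Lemma \ref{T:prelim-nosymm-c}, with three new ingredients needed: a sign symmetry to handle $p - q$ odd, a version of the spherical expansion using $X^T$ in place of $X^*$, and a new combinatorial argument for $p \ne q$ with $p - q$ even. First, for $p - q$ odd, I would observe that $-I \in \Orthogonal{n^2}$ acts on $\mat{n}{\R}$ by $X \mapsto -X$, so rotational invariance yields $\E\tr(X^p(X^T)^q) = (-1)^{p+q} \E\tr(X^p(X^T)^q)$. Since $p + q$ and $p - q$ have the same parity, the expectation vanishes, and so does $\E[\|X\|^2 \tr(X^p(X^T)^q)]$.

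For $p - q$ even, I would follow the reduction used in Lemma \ref{T:prelim-nosymm-c}: writing $X = \frac{\|X\|}{\sqrt{n}} \widetilde{X}$ with $\widetilde{X}$ uniform on the sphere of radius $\sqrt{n}$ in $\mat{n}{\R}$ and independent of $\|X\|$, the hypothesis $t_{p+q+2}(X) \le \alpha_{p+q+2}/n$ gives
\[
\E\bigl[\|X\|^2 \tr(X^p (X^T)^q)\bigr] = \bigl(n + O(1)\bigr) \cdot \E\tr(\widetilde{X}^p(\widetilde{X}^T)^q),
\]
so it suffices to show $\E\tr(\widetilde{X}^p(\widetilde{X}^T)^q) = n + O(1)$ when $p = q$ and $O(1)$ otherwise. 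I would then expand the trace as a sum over tuples $(i_0, \ldots, i_{p+q-1})$ and apply the spherical moment formula (Lemma 14 of \cite{MM-quantum}): only tuples whose $p + q$ entries can be grouped into matching pairs have nonzero contribution, and each pairing with $k$ distinct free indices contributes of order $n^{k - (p+q)/2}$. For $p = q$, the dominant non-crossing pairing that matches the $\ell$th factor of $X^p$ with the $(p - \ell + 1)$st factor of $(X^T)^q$ imposes only $p - 1$ nontrivial index constraints and so has $k = p + 1$, producing the leading $n + O(1)$; all other pairings satisfy $k \le p$ and hence contribute $O(1)$.

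The main obstacle is the $p \ne q$ case: I must show that \emph{no} pairing achieves the top value $k = (p+q)/2 + 1$. The point is that this top value is attained only by non-crossing matchings pairing each entry from $X^p$ with an entry from $(X^T)^q$; such a balanced non-crossing matching exists only when $p = q$. When $p \ne q$, every pairing must either cross or pair two forward entries (or two backward entries) together, and either option forces an additional index collapse. I would verify this by a direct case analysis, or equivalently by a genus/ribbon-graph argument analogous to the standard one for $\E\tr(X^p)$ in the Wigner setting. Once this is established, every pairing contributes at most $O(1)$, the (finite) sum over pairings remains $O(1)$, and multiplying by the $n + O(1)$ from the reduction step yields the claimed $O(n)$.
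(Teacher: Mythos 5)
Your proposal is correct and follows essentially the same route as the paper: reduce to the uniform distribution on the sphere via the moment hypothesis, expand the trace over index tuples, and identify the maximizing equality structures (with the parity case handled by the $X\mapsto -X$ symmetry). The one step you defer --- that no pairing for $p\neq q$ attains $(p+q)/2+1$ distinct indices --- is asserted with comparable brevity in the paper itself, which simply exhibits the maximizing structures and notes their number is bounded in $p$ and $q$.
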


\begin{proof}
  First note that if $p - q$ is odd, then
  $\E\left[\|X\|^2\tr(X^{p}(X^T)^{q})\right]=0$ by symmetry.

  If $p-q$ is even, then as in the proof of Lemma
  \ref{T:prelim-nosymm-c} we may first assume that $X$ is uniformly
  distributed on the sphere of radius $\sqrt{n}$ in $\mat{n}{\R}$.
  We have the expansion
  \begin{equation}\label{Eq:real-expansion}
    \E\left[\tr(X^{p}(X^T)^{q})\right]= 
    \sum_{\substack{i,j\\i_1,\ldots,i_{p-1}\\j_1,\ldots,j_{q-1}}}
    \E\left[x_{ii_1}x_{i_1i_2}\cdots
      x_{i_{p-1}j}x_{ij_1}x_{j_1j_2}\cdots x_{j_{q-1}j}\right]
  \end{equation}
  Consider first the case that $p=q$.  A term on the right side of
  \eqref{Eq:real-expansion} is nonzero only if each matrix entry
  $x_{ij}$ appears an even number of times. The total contribution
  from terms in which the indices (including $i$ and $j$) are chosen
  such that $i_1=j_1,\ldots,i_{p-1}=j_{p-1}$, but are otherwise
  distinct, is
  \begin{align*}
    n(n-1)\cdots\left(n-p\right) \E\left[x_{11}^2\cdots
    x_{1p}^2\right] = 
    n^p \frac{n(n-1)\cdots\left(n-p\right)}{(n^2+2p-2)(n^2+2p-4)\cdots
    n^2}
    & =n+O(1).
  \end{align*}
  Each of the non-zero expectations has the same order in $n$, and so
  this is the main contribution to the sum, since it involves the
  maximum number of distinct indices.
  
  Now suppose that $p\neq q$ and $p-q$ is even; assume without loss of
  generality that $p< q$ and write $q = p + 2k$.  Consider the
  contribution of those terms on the right side of
  \eqref{Eq:real-expansion} in which $i_\ell=j_\ell$ for
  $1\le \ell\le p-1$, $j_\ell=j_{k+\ell}$ for $p\le\ell\le p+k-1$, and
  $j = j_p = j_{q-1}$, and the indices are distinct except for these
  restrictions. The contribution of these terms is 
  \begin{align*}
    n(n-1)\cdots\left(n-p-k+1\right)
    \E\left[x_{11}^2\cdots x_{1,p+k}^2\right]
    &
      =n^{(p+q)/2}\frac{n(n-1)\cdots\left(n-\frac{p+q}{2}+1\right)}{(n^2+p+q-2)(n^2+p+q-4)\cdots n^2} \\
    & =1+O\left(n^{-1}\right).
  \end{align*}

  The leading contribution to \eqref{Eq:real-expansion} is made by
  these terms, and others obtained by permuting the equality structure
  among the indices $j, j_p, \dots, j_{q-1}$; these equality
  structures maximize the number of indices which can be chosen to be
  distinct in this group, and all nonzero terms are of the same order
  in $n$.  Since we are not interested in the leading coefficient in
  \eqref{Eq:real-expansion} in this case, it suffices for our purposes
  to note that the number of permutations is bounded in terms of $p$
  and $q$.
\end{proof}

We now proceed with the proofs of the $\mat{n}{\R}$ cases of Theorems
\ref{T:traces-of-powers-means} and \ref{T:traces-of-powers-clt-nonnormal}.

\begin{proof}[Proof of Theorem \ref{T:traces-of-powers-means} for $V=\mat{n}{\R}$]
 Trivially, if $p$ is odd then $\E W_p=0$. 

To treat the case that $p$ is even, we make use of Lemma
\ref{T:limits_all_spaces}.  Recall that in $\mat{n}{\R}$, the orthonormal basis
 $\{B_\alpha\}_{\alpha=1}^d=\{E_{jk}\}_{j,k=1}^n$, so that given $A\in\mat{n}{\R}$, 
\[
\sum_{\alpha=1}^dB_\alpha AB_\alpha =\sum_{j,k=1}^nE_{jk}AE_{jk}=A^T.
\]
It follows from this computation and part \ref{P:lin} of Lemma
\ref{T:limits_all_spaces} that
\begin{equation}\label{E:lin-diff-real-nosym}\begin{split}
  \lim_{\epsilon\to0}\frac{1}{\epsilon^2}\E\left[W_{\epsilon,p}-W_p\big|X\right]
  &=\sum_{\ell=0}^{p-2}\frac{2\|X\|^2(\ell+1)
   }{d(d-1)}\tr\left(X^\ell(X^T)^{p-2-\ell}\right)-\frac{p(p+d-2)}{d(d-1)}W_p \\
  &=\sum_{\ell=0}^{p-2}\frac{\|X\|^2 p}{d(d-1)}\tr\left(X^\ell(X^T)^{p-2-\ell}\right)-\frac{p(p+d-2)}{d(d-1)}W_p,
 \end{split}
\end{equation}
where the second line follows by replacing $\ell$ with $p-2-\ell$, and
averaging the resulting expression with the first.

Since the expectation of the left-hand side of \eqref{E:lin-diff-real-nosym} is zero by
exchangeability, the expectation of the right-hand side is zero as well, and so taking the expectation of both sides of the formula above gives that
\[
\E
W_p=\sum_{\ell=0}^{p-2}\frac{1}{(p+d-2)}\E\left[\|X\|^2\tr\left(X^\ell(X^T)^{p-2-\ell}\right)\right].
\]
By Lemma \ref{T:traces-of-products},
\[
\E\left[\|X\|^2\tr\left(X^{\frac{p}{2}-1}(X^T)^{\frac{p}{2}-1}\right)\right]=n^2+O(n),
\] 
and all the other terms in the above sum are $O(n)$, and thus
$\E W_p=1+O(n^{-1})$.
\end{proof}

\begin{proof}[Proof of Theorem \ref{T:traces-of-powers-clt-nonnormal} for
  $V=\mat{n}{\R}$]
We begin with condition \emph{(\ref{inf-lincond})} of Theorem
\ref{T:inf-abstract}.  Starting from Equation
\eqref{E:lin-diff-real-nosym}  above, since both sides of the equation
have mean zero, it follows that if $Y_p:=W_p-\E W_p$ and
$Y_{\epsilon,p}:=W_{\epsilon,p}-\E W_{\epsilon,p}$, then 
\begin{equation*}\begin{split}
\lim_{\epsilon\to0}\frac{1}{\epsilon^2}&\E\left[Y_{\epsilon,p}-Y_p\big|X\right]\\
&=\frac{p}{d(d-1)}\sum_{\ell=0}^{p-2}\left[\|X\|^2\tr\left(X^\ell(X^T)^{p-2-\ell}\right)
  -\E\left(\|X\|^2
    \tr\left(X^\ell(X^T)^{p-2-\ell}\right)\right)\right]\\
&\qquad\qquad-\frac{p(p+d-2)}{d(d-1)}Y_p.
\end{split}\end{equation*}
It follows essentially as in the previous section that for any $\ell$,
\begin{align*}\E\Big|\|X\|^2&\tr(X^{\ell}(X^{T})^{p-2-\ell})-\E\left[\|X\|^2 \tr(X^{\ell}(X^{T})^{p-2-\ell})\right]\Big|=O\big(n\big),
\end{align*}
and we therefore choose the matrix $\Lambda$ in the statement of Theorem \ref{T:inf-abstract} to be diagonal with $p$th entry given by $\frac{p(p+d-2)}{d(d-1)}$, the function $s(\epsilon)=\epsilon^2$, and the error $E$ to have $p$th entry
\[E_p=\frac{p}{d(d-1)}\left[\|X\|^2\tr\left(\sum_{\ell=0}^{p-2}X^\ell(X^T)^{p-2-\ell}\right)-\E\left[\|X\|^2 \tr\left(\sum_{\ell=0}^{p-2}X^\ell(X^T)^{p-2-\ell}\right)\right]\right],\]
so that 
\[\|\Lambda^{-1}\|_{op}\E|E|\le \frac{\kappa_m}{n},\]
with the constant $\kappa_m$ depending only on $m$.

Moving on to condition \emph{(\ref{inf-quadcond})} of Theorem \ref{T:inf-abstract}, it follows from Lemma \ref{T:limits_all_spaces} exactly as in the previous case that
\[\lim_{\epsilon\to0}\frac{1}{\epsilon^2}\E\left[(W_\epsilon-W)_p(W_\epsilon-W)_q|X\right]=\frac{2
        pq}{n^2(n^2-1)}\left[\|X\|^2\tr((X^T)^{p-1}X^{q-1})-W_pW_q\right].\]

It follows from Proposition \ref{T:star-poly-concentration} and the fact that $|\E W_p|=O(1)$ that $\E|W_pW_q|\le \kappa_{p,q}$ for some constant depending only on $p$ and $q$, and so if we choose $\Sigma$ to be diagonal with $\sigma_{pp}=p$,  
the random matrix $E'$ in the statement of Theorem
\ref{T:inf-abstract} has $p$-$q$ entry
\begin{align*}[E']_{pq}=\frac{2pq}{n^2(n^2-1)}&\left[2\|X\|^2\tr((X^T)^{p-1}X^{q-1})-\E \left[2\|X\|^2\tr((X^T)^{p-1}X^{q-1})\right]\right.\\&\qquad\qquad\left.-W_pW_q+\E W_pW_q+O(n)\right].\end{align*}
By Proposition \ref{T:star-poly-concentration}, 
$\E[|W_p|^2-\E|W_p|^2]^2$ is  bounded independently of $n$, and we have observed already that
\begin{align*}\E\Big|\|X\|^2&\tr((X^T)^{p-1}X^{q-1})-\E\left[\|X\|^2\tr((X^T)^{p-1}X^{q-1})\right]\Big|=O(n)\end{align*} and so
(making use of the fact that $\|\Lambda^{-1}\|_{op}=d=n^2$),
\[\|\Lambda^{-1}\|_{op}\E\|E'\|\le \frac{\kappa'_m}{n}\]
for some constant $\kappa_m'$ depending only on $m$.  
\end{proof}

\section{Rotationally invariant ensembles in $\symmat{n}{\C}$}\label{S:Hermitian}

We initially proceed via Lemma \ref{T:limits_all_spaces} as above.
Since in $\symmat{n}{\C}$, the orthonormal basis is
\[\{B_\alpha\}_{\alpha=1}^d=\{E_{jj}\}_{j=1}^n\cup\{F_{jk}\}_{1\le
  j<k\le n}\cup\{iG_{jk}\}_{1\le j<k\le n},\]
for a given $A\in\symmat{n}{\C}$, 
\begin{equation*}\begin{split}\sum_{\alpha=1}^dB_\alpha AB_\alpha&=\sum_{j=1}^nE_{jj}AE_{jj}+\frac{1}{2}\sum_{1\le
    j<k\le
    n}(E_{jk}+E_{kj})A(E_{jk}+E_{kj}) \\&\qquad\qquad-\frac{1}{2}\sum_{1\le
    j<k\le
    n}(E_{jk}-E_{kj})A(E_{jk}-E_{kj})\\&=\sum_{j,k=1}^nE_{jk}AE_{kj}\\&=\tr(A)I.\end{split}\end{equation*}
It thus follows from Lemma \ref{T:limits_all_spaces} that
\begin{equation}\begin{split}\label{E:csym-linear1}
    \lim_{\epsilon\to0}\frac{1}{\epsilon^2}
    \E\left[W_{\epsilon,p}-W_p\big|X\right] 
    & =\sum_{\ell=0}^{p-2}\frac{2(\ell+1)}{d(d-1)} W_2W_\ell
    W_{p-2-\ell}-\frac{p(p+d-2)}{d(d-1)}W_p \\
    & = \frac{p}{d(d-1)} W_2 \sum_{\ell=0}^{p-2}W_\ell
    W_{p-2-\ell}-\frac{p(p+d-2)}{d(d-1)}W_p,
\end{split}\end{equation}
where the second line follows by replacing $\ell$ with $p-2-\ell$, and
averaging the resulting expression with the first.

We first use this expression to prove Theorem \ref{T:traces-of-powers-means}.

\begin{proof}[Proof of Theorem \ref{T:traces-of-powers-means} for $V=\symmat{n}{\C}$.]
If $p$ is odd, then $\E W_p=\E[\tr(X^p)]=0$ by symmetry.

Suppose now that $p$ is even.
As in the proofs of Lemmas \ref{T:prelim-nosymm-c} and
\ref{T:traces-of-products}, we may assume that $X$ is uniformly
distributed in the sphere of radius $\sqrt{n}$ in $\symmat{n}{\C}$, so
that $W_2 = n$ is constant. 
Equation \eqref{E:csym-linear1} and the fact that
$(W_{\epsilon, p}, W_p)$ is exchangeable imply that
\begin{equation}
  \label{Eq:EW_p-symm-c}
\E W_p
=\frac{n}{p+d-2}\sum_{\ell=0}^{p-2}\E[W_{\ell}W_{p-\ell-2}].
\end{equation}

Proposition
\ref{T:star-poly-concentration} implies that
\begin{equation}
  \label{Eq:W-covariance-bound-hermitian}
\E [W_{\ell}W_{p-\ell-2}] - (\E W_{\ell})(\E W_{p-\ell-2}) =
\cov(W_{\ell},W_{p-2-\ell}) 
\le \sqrt{\var(W_{\ell}) \var(W_{p-2-\ell})} = O(1),
\end{equation}
and so by \eqref{Eq:EW_p-symm-c},
\begin{align*}
\frac{\E W_p}{n} & = \frac{n^2}{p+d-2}\left(\sum_{\ell=0}^{p-2}\frac{\E
  W_{\ell}}{n}\frac{\E W_{p-\ell-2}}{n} +  O(n^{-2})\right).
\end{align*}

Writing $p = 2r$ and $\beta_r = \frac{\E W_{2r}}{n}$, we therefore
have that $\beta_0 = \beta_1 = 1$ and
\[
  \beta_r = \left(\sum_{k=0}^{r-1} \beta_k \beta_{r-k-1} +  O(n^{-2})\right)(1 + O(n^{-2}))
\]
for $r \ge 2$.  Recalling that the Catalan numbers
$C_r = \frac{1}{r+1}\binom{2r}{r}$ satisfy the recurrence $C_0 = 1$
and $C_r = \sum_{k=0}^{r-1} C_k C_{r-k-1}$, it now follows by induction on
$r$ that $\beta_r = C_r + O(n^{-2})$, where the $O$ term may also
depend on $r$.
\end{proof}

Note that if $X$ is uniformly distributed in the sphere of
$\symmat{n}{\C}$, then $iX$ is uniformly distributed in the sphere of
$\asymmat{n}{\C}$.  The anti-Hermitian case of Theorem
\ref{T:traces-of-powers-means} thus follows immediately from the
Hermitian case.

Recall that if $X$ is uniformly distributed on the sphere of
radius $\sqrt{n}$ in $\symmat{n}{\C}$, it follows from Proposition
\ref{T:star-poly-concentration} that the $W_p$ have bounded variance;
the following proposition shows that this also holds under the
concentration condition we have put on $\|X\|^2$.

\begin{prop}\label{T:variance-bound-hermitian}
Let $X$ be a random matrix in $\symmat{n}{\C}$ as above, whose
distribution is invariant under rotations in $\symmat{n}{\C}$, and let $W_p=\tr(X^p)$.
Suppose that $\E\|X\|^2=n$ and that for each $k$, there is a constant $\alpha_k$ depending only on $k$ such that 
\[t_k(X)=\left|n^{-k/2}\E\|X\|^{k}-1\right|\le\frac{\alpha_k}{n^2}.\] 
Then for each fixed $p\in\N$, there are constants $\kappa_{p,2}$ and
$\kappa_{p,4}$, depending on
$p$ and the $\alpha_k$
but not $n$, such that
\[
\var(W_p) \le \kappa_{p,2}
\qquad \text{and} \qquad 
\E (W_p - \E W_p)^4 \le \kappa_{p,4} n^2.
\]
\end{prop}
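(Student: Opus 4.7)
The plan is to reduce to the case of $X$ uniformly distributed on the sphere of radius $\sqrt n$ in $\symmat{n}{\C}$, where Proposition \ref{T:star-poly-concentration} directly bounds the moments of $\tr(X^p) - \E \tr(X^p)$, and then transfer these bounds back to the general rotationally invariant case using the concentration hypothesis on $\norm{X}$.

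By rotational invariance, write $X = \rho \widetilde{X}$, where $\rho := \norm{X}/\sqrt n$ and $\widetilde{X}$ is uniformly distributed on the sphere of radius $\sqrt n$ in $\symmat{n}{\C}$, with $\rho$ and $\widetilde{X}$ independent. Setting $\widetilde{W}_p := \tr(\widetilde{X}^p)$, we have $W_p = \rho^p \widetilde{W}_p$ and
\[
W_p - \E W_p = \rho^p \bigl(\widetilde{W}_p - \E \widetilde{W}_p\bigr) + (\rho^p - \E \rho^p)\, \E \widetilde{W}_p.
\]
Squaring, taking expectations, and using independence (which kills the cross term since $\E[\widetilde{W}_p - \E \widetilde{W}_p] = 0$) gives
\[
\var(W_p) = (\E \rho^{2p})\, \var(\widetilde{W}_p) + (\E \widetilde{W}_p)^2\, \var(\rho^p).
\]
The concentration hypothesis implies $\E \rho^{2p} = 1 + O(n^{-2})$ and, via $\var(\rho^p) = \E \rho^{2p} - (\E \rho^p)^2$, also $\var(\rho^p) = O(n^{-2})$; Proposition \ref{T:star-poly-concentration} gives $\var(\widetilde{W}_p) = O(1)$; and Theorem \ref{T:traces-of-powers-means} applied to $\widetilde{X}$ gives $|\E \widetilde{W}_p| = O(n)$. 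Combining these bounds yields $\var(W_p) = O(1)$.

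For the fourth moment I would apply $(a + b)^4 \le 8(a^4 + b^4)$ to the same decomposition and use independence to factor the expectations:
\[
\E (W_p - \E W_p)^4 \le 8 (\E \rho^{4p})\, \E(\widetilde{W}_p - \E \widetilde{W}_p)^4 + 8 (\E \widetilde{W}_p)^4\, \E (\rho^p - \E \rho^p)^4.
\]
The $q = 4$ case of Proposition \ref{T:star-poly-concentration} gives $\E(\widetilde{W}_p - \E \widetilde{W}_p)^4 = O(1)$, and $\E \rho^{4p} = O(1)$, so the first term is $O(1)$. The main technical point is the bound $\E(\rho^p - \E \rho^p)^4 = O(n^{-2})$; this follows by writing $\E(\rho^p - \E \rho^p)^4 \le 8 \E(\rho^p - 1)^4 + 8 (\E \rho^p - 1)^4$ and expanding $(\rho^p - 1)^4 = \sum_{j=0}^{4}\binom{4}{j}(-1)^{4-j} \rho^{jp}$: the constant contributions $1 - 4 + 6 - 4 + 1$ cancel exactly, leaving a combination of terms each of size $O(n^{-2})$ from the hypothesis. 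Paired with $(\E \widetilde{W}_p)^4 = O(n^4)$, the second term is $O(n^2)$, giving the claimed bound.

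The only real obstacle is the fourth-moment bound on $\rho^p - \E \rho^p$; this is precisely where the strengthened hypothesis $t_k \le \alpha_k/n^2$ (rather than $\alpha_k/n$, as in the non-Hermitian cases) is essential. Because $\E \widetilde{W}_p$ is of order $n$ in the Hermitian setting, fluctuations of $\rho$ must be controlled in $L_4$ at scale $n^{-1/2}$ in order to keep their amplified contribution to $\var(W_p)$ and $\E(W_p - \E W_p)^4$ at the claimed sizes $O(1)$ and $O(n^2)$.
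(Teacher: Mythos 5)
Your proposal is correct and follows essentially the same route as the paper's proof: the same decomposition $W_p - \E W_p = \rho^p(\widetilde{W}_p - \E\widetilde{W}_p) + (\rho^p - \E\rho^p)\E\widetilde{W}_p$, the same inputs (Proposition \ref{T:star-poly-concentration} for $\widetilde{W}_p$, Theorem \ref{T:traces-of-powers-means} for $\E\widetilde{W}_p = O(n)$, and the binomial expansion of $\E(\rho^p-1)^4$ giving $O(n^{-2})$ from the $t_k \le \alpha_k/n^2$ hypothesis). The only cosmetic difference is that you combine the two terms via an exact variance identity and the inequality $(a+b)^4 \le 8(a^4+b^4)$, whereas the paper uses the $L^q$ triangle inequality; your closing remark about why the strengthened hypothesis is needed matches the paper's motivation.
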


\begin{proof}
  As above, we write $X = \frac{\norm{X}}{\sqrt{n}}\widetilde{X}$,
  where $\widetilde{X}$ is uniformly distributed on the sphere of
  radius $\sqrt{n}$ in $\symmat{n}{\C}$ and $\widetilde{X}$ is
  independent from $\norm{X}$.  Let $R:=\frac{\|X\|}{\sqrt{n}}$ and
  $\widetilde{W}_p=\tr(\widetilde{X}^p)$.
  We have
  \begin{align*}
    W_p - \E W_p & = R^p \widetilde{W}_p - (\E R^p) (\E
                   \widetilde{W}_p) \\
    & = R^p (\widetilde{W}_p - \E \widetilde{W}_p) + (\E
    \widetilde{W}_p)\bigl[ (R^p - 1) - (\E R^p - 1)\bigr]
  \end{align*}
  and therefore 
  \begin{align*}
    \left(\E \abs{W_p - \E W_p}^q\right)^{1/q}
    & \le \left(1 + t_{pq}(X) \right)^{1/q} \left(\E \abs{\widetilde{W}_p -
        \E \widetilde{W}_p}^q\right)^{1/q} \\
    & \qquad + \abs{\E \widetilde{W}_p} \left[\left(\E \abs{R^p-1}^q\right)^{1/q}
      + t_p(X)\right]
  \end{align*}
  for any $q \ge 1$ by the $L^q$ triangle inequality. By Proposition
  \ref{T:star-poly-concentration}, Theorem
  \ref{T:traces-of-powers-means}, and the fact that
  $t_k(X) = O(n^{-2})$ for each $k$, we have
  $\left(\E \abs{\widetilde{W}_p - \E \widetilde{W}_p}^q\right)^{1/q}
  = O(1)$, $\abs{\E \widetilde{W}_p} = O(n)$,
  $\left(\E R^{pq} \right)^{1/q} = O(1)$, and
  $\abs{\E R^p - 1} = O(n^{-2})$.

  To complete the proof, observe that
  \[
    \E (R^p - 1)^2 = (\E R^{2p} - 1) - 2 (\E R^p - 1) \le t_{2p}(X) +
    2 t_p(X) = O(n^{-2})
  \]
  and similarly
  \[
    \E (R^p - 1)^4 \le t_{4p}(X) + 4 t_{3p}(X) + 6 t_{2p}(X) + 4
    t_p(X) = O(n^{-2}).
    \qedhere
  \]
\end{proof}

\begin{proof}[Proof of Theorem \ref{T:traces-of-powers-clt-normal} for $V=\symmat{n}{\C}$]
  We begin with condition \eqref{inf-lincond} of Theorem
  \ref{T:inf-abstract}.  We write $R:=\frac{\|X\|}{\sqrt{n}}$,
  $Y_p:=W_p-\E W_p$, and
  $Y_{\epsilon,p}:=W_{\epsilon,p}-\E W_{\epsilon,p}$. By
  \eqref{E:csym-linear1},
\begin{equation}\begin{split}\label{E:lin-diff-Hermite}
    \lim_{\epsilon\to0}\frac{1}{\epsilon^2}
    &\E\left[Y_{\epsilon,p}-Y_p\big|X\right]\\
    &=\sum_{\ell=0}^{p-2}\frac{p}{d(d-1)}\left[W_2W_\ell W_{p-2-\ell}
      -\E\left[W_2W_\ell W_{p-2-\ell}\right]\right]
    -\frac{p(p+d-2)}{d(d-1)}Y_p\\
    &=\sum_{\ell=0}^{p-2}\frac{pn}{d(d-1)}\left[R^2W_\ell W_{p-2-\ell}
      -\E\left[R^2W_{p-2-\ell}W_\ell\right]\right]
    -\frac{p(p+d-2)}{d(d-1)}Y_p.
\end{split}\end{equation}

Observe that
\begin{align*}
R^2 W_\ell
  W_{p-2-\ell} -\E\left[R^2W_{p-2-\ell}W_\ell\right] = Y_\ell\E W_{p-2-\ell}+Y_{p-2-\ell}\E
                  W_\ell+Y_2\left(\frac{\E W_\ell\E
                  W_{p-2-\ell}}{n}\right)+F_{p,\ell},
\end{align*}
where
\begin{equation}\label{Eq:Fpl}
\begin{split}
F_{p,\ell}  
&:=  R^2Y_\ell
                  Y_{p-2-\ell}-\E[R^2Y_\ell
                  Y_{p-2-\ell}]+\left[(R^2-1)Y_\ell-\E[(R^2-1)Y_\ell]\right]\E
                                   W_{p-2-\ell}\\
&\qquad+\left[(R^2-1)Y_{p-2-\ell}-\E[(R^2-1)Y_{p-2-\ell}]\right]\E
                                   W_{\ell}.\\
&=  (R^2-1)Y_\ell
                  Y_{p-2-\ell} -\E[(R^2-1) Y_\ell
                  Y_{p-2-\ell}] + Y_\ell Y_{p-2-\ell} - \E [Y_\ell
                  Y_{p-2-\ell}] \\
&\qquad +\left[(R^2-1)Y_\ell-\E[(R^2-1)Y_\ell]\right]\E
                                   W_{p-2-\ell} \\
& \qquad +\left[(R^2-1)Y_{p-2-\ell}-\E[(R^2-1)Y_{p-2-\ell}]\right]\E
                                   W_{\ell}.
\end{split}
\end{equation}

Using this expression in \eqref{E:lin-diff-Hermite} yields
\begin{equation*}\begin{split}
\lim_{\epsilon\to0}&\frac{1}{\epsilon^2}\E\left[Y_{\epsilon,p}-Y_p\big|X\right]\\
&=\sum_{\ell=0}^{p-2}\frac{pn}{d(d-1)}\left[Y_\ell\E W_{p-2-\ell}+Y_{p-2-\ell}\E
                  W_\ell+Y_2\left(\frac{\E W_\ell\E
                  W_{p-2-\ell}}{n}\right)+F_{p,\ell}\right]-\frac{p(p+d-2)}{d(d-1)}Y_p
            \\
&=\frac{p}{d(d-1)} \left[2n \sum_{\ell=0}^{p-2} Y_\ell\E
    W_{p-2-\ell}+ Y_2\sum_{\ell=0}^{p-2}\E W_\ell\E W_{p-2-\ell}
    +n \sum_{\ell=0}^{p-2} F_{p,\ell}
    - (p+d-2) Y_p\right].
\end{split}\end{equation*}

As in the statement of Theorem \ref{T:traces-of-powers-clt-normal}, take
\[
Z_p=Y_p-\frac{p\E W_p}{2n}Y_2,
\]
for $p\ge 0$ (in particular, $Z_0 = Z_2 = 0$), and
\[
Z_{\epsilon,p}=Y_{\epsilon,p}-\frac{p\E W_p}{2n}Y_{\epsilon,2}
=Y_{\epsilon,p}-\frac{p\E W_p}{2n}Y_2,
\]
where the last equality follows since
$W_{\epsilon,2} = \|X_\epsilon\|^2 = \|X\|^2 = W_2$, so that
$Z_{\epsilon,p} - Z_p = Y_{\epsilon,p} - Y_p$.  We then have
\begin{equation}\begin{split}\label{E:Z-linear}
\lim_{\epsilon\to0}\frac{1}{\epsilon^2}
\E\left[Z_{\epsilon,p}-Z_p\big|X\right]
=\frac{p}{d(d-1)} &\left[2n\sum_{\ell=0}^{p-2}Z_\ell\E W_{p-2-\ell}
+(p+1) Y_2 \sum_{\ell=0}^{p-2}\E W_\ell\E
  W_{p-2-\ell} \right.\\
&\qquad\left.+n\sum_{\ell=0}^{p-2} F_{p,\ell} - (p+d-2)\left(Z_p+\frac{p\E W_p}{2n}Y_2\right)\right].
\end{split}\end{equation}
By Theorem \ref{T:traces-of-powers-means},
\[
\sum_{\ell=0}^{p-2} \E W_\ell \E W_{p-2-\ell} = n^2
\sum_{\ell=0}^{p-2} C_{\ell} C_{p-2-\ell} = O(n^2)
\]
if $p$ is even, and is $0$ otherwise.  Equation \eqref{E:Z-linear}
therefore implies that
\begin{align*}
\lim_{\epsilon\to0}\frac{1}{\epsilon^2}&\E\left[Z_{\epsilon,p}-Z_p\big|X\right]
=\frac{p}{d(d-1)}\left[2n \sum_{\ell=0}^{p-2} Z_\ell\E
  W_{p-2-\ell} -(p+d-2) Z_p + n \sum_{\ell=0}^{p-2} F_{p,\ell} + O(n^2) Y_2\right].
\end{align*}

We define a matrix $\Lambda$ with entries indexed by
$p,q\in\{1,\ldots,m\}\setminus\{2\}$ as follows:
\[
\big[\Lambda\big]_{pq}=\begin{cases}-\frac{2pC_{(p-2-q)/2}}{d-1} & 
  \text{if $1\le
    q\le p-2$, $q\neq 2$, and $p-2-q$ is even};
  \\\frac{p}{d-1} & \text{if }q=p;\\0 &\text{otherwise}.\end{cases}
\]
Then $\Lambda=\frac{1}{d-1}T$ where $T$ is an invertible, lower
triangular matrix which is independent of $n$, and so
$\|\Lambda^{-1}\|_{op}=(d-1)\|T^{-1}\|_{op}\le \kappa_mn^2.$

We define the random error $E$  to be 
\[
E=\lim_{\epsilon\to 0}\E\left[Z_\epsilon-Z\big|X\right]+\Lambda Z,
\]
so that 
\begin{equation}\begin{split}\label{E:Hermitian-linear-error}
E_p&=\frac{2p}{d-1}\sum_{\ell=0}^{p-2}Z_\ell\left(\frac{\E
    W_{p-2-\ell}}{n}-C_{(p-2-\ell)/2}\right)
-\frac{p(p-2)}{d(d-1)}Z_p
+ \frac{2 p}{n(d-1)}\sum_{\ell=0}^{p-2}F_{p,\ell} +O(n^{-2}) Y_2.
\end{split}\end{equation}

Since $Y_\ell$ and $Y_2$ are centered with bounded variance and
$\E W_\ell = O(n)$, $Z_\ell$ is centered with bounded variance as
well. We also claim that $\E \abs{F_{p,\ell}} = O(1)$.

To see this, observe first that by H\"older's inequality and
Proposition \ref{T:variance-bound-hermitian},
\begin{align*}
  \E \abs{(R^2 - 1) Y_\ell Y_{p-2-\ell}} 
  & \le \left(\E (R^2 - 1)^2\right)^{1/2} \left(\E
    Y_\ell^4\right)^{1/4} \left(\E Y_{p-2-\ell}^4\right)^{1/4}  = O(1)
\end{align*}
where $\E (R^2 - 1)^2$ is bounded as in the proof of Proposition
\ref{T:variance-bound-hermitian}. The other terms in
\eqref{E:Hermitian-linear-error} are bounded in $L^1$ similarly. It
then follows that \( \E|E|\le \kappa_m n^{-3}.  \)

Finally, we consider part \ref{P:quad_bar} of Lemma
\ref{T:limits_all_spaces}. In the present context, the
Hilbert--Schmidt inner product is real and therefore coincides with
the real inner product $\inprod{\cdot}{\cdot}$.  Therefore
\begin{align*}
  \sum_{\alpha}\tr(X^{p-1}B_\alpha)\tr(X^{q-1}B_\alpha)
  =\sum_\alpha\inprod{X^{p-1}}{B_\alpha}\inprod{X^{q-1}}{B_\alpha}=\inprod{X^{p-1}}{X^{q-1}}=W_{p+q-2}
\end{align*}
and
\begin{align*}
\sum_\alpha X_\alpha\tr(X^{p-1}B_\alpha)
= \sum_{\alpha} \inprod{X}{B_\alpha} \inprod{X^{p-1}}{B_\alpha}
= \inprod{X}{X^{p-1}}
=W_p.
\end{align*}
It thus follows from part (\ref{P:quad_no_bar}) of Lemma \ref{T:limits_all_spaces} that
\begin{align*}\lim_{\epsilon\to0}\frac{1}{\epsilon^2}\E\left[(W_\epsilon-W)_p(W_\epsilon-W)_q\big|X\right]=\frac{2pq}{d(d-1)}\left[W_2W_{p+q-2}-W_pW_q\right].\end{align*}
As before, $\E W_\epsilon=\E W$, so that $W_\epsilon-W=Y_\epsilon-Y$, and since
$Y_{\epsilon,2}=Y_2$, it is furthermore the case that
$Y_\epsilon-Y=Z_\epsilon-Z$.  That is, for $p,q\in\{1,\ldots,m\}\setminus\{2\}$, 
\begin{align*}
\lim_{\epsilon\to0}\frac{1}{\epsilon^2}\E\left[(Z_\epsilon-Z)_p(Z_\epsilon-Z)_q\big|X\right]=\frac{2pq}{d(d-1)}\left[W_2W_{p+q-2}-W_pW_q\right].
\end{align*}

Theorem \ref{T:traces-of-powers-means}, Proposition
\ref{T:variance-bound-hermitian}, and symmetry imply that
\[
  \E W_p W_q = \begin{cases}
    n^2 C_{p/2} C_{q/2} + O(n) & \text{if $p$ and $q$ are both even,} \\
    O(1) & \text{if $p$ and $q$ are both odd,} \\
    0 & \text{if $p$ and $q$ have opposite parities}.
  \end{cases}
\]

We define the matrix $\Gamma$ indexed by $p,q\in\{1,\ldots,m\}\setminus\{2\}$ by 
\[
\Gamma_{p,q} = \frac{pq}{d-1}\begin{cases}
    C_{(p+q-2)/2} - C_{p/2} C_{q/2} & \text{if $p$ and $q$ are both even,} \\
    C_{(p+q-2)/2} & \text{if $p$ and $q$ are both odd,} \\
    0 & \text{if $p$ and $q$ have opposite parities}.
  \end{cases}
\]
and let $\Sigma=\Lambda^{-1} \Gamma$.  With these choices of $\Lambda$
and $\Sigma$, the random error matrix $E'$ of Theorem
\ref{T:inf-abstract} can be bounded as in the previous sections to
complete the proof. As noted in the introduction, it is not obvious
from this form that $\Sigma$ is positive semidefinite.  However, the
argument above shows that $\Sigma$ arises as the limit of a sequence
of covariance matrices, and therefore must be positive semidefinite.
\end{proof}

\section{Rotationally invariant ensembles in $\symmat{n}{\R}$}\label{S:symmetric}

Proceeding by Lemma \ref{T:limits_all_spaces} as before, let $A\in\symmat{n}{\R}$ and recall that in the case of $\symmat{n}{\R}$, $\{B_\alpha\}_{\alpha=1}^d=\{E_{jj}\}_{j=1}^n\cup\{F_{jk}\}_{1\le j<k\le n}.$  We have
\begin{equation*}\begin{split}\sum_{\alpha=1}^dB_\alpha AB_\alpha&=\sum_{j=1}^nE_{jj}AE_{jj}+\frac{1}{2}\sum_{1\le
    j<k\le
    n}(E_{jk}+E_{kj})A(E_{jk}+E_{kj})\\&=\frac{1}{2}\sum_{j,k=1}^n E_{jk}AE_{jk}+\frac{1}{2}\sum_{j,k=1}^nE_{jk}AE_{kj}\\&=\frac{1}{2}A+\frac{1}{2}\tr(A)I,\end{split}\end{equation*}
using the fact that $A$ is symmetric.  It thus follows from Lemma \ref{T:limits_all_spaces} that
\begin{equation}\begin{split}\label{E:rsym-linear1}
\lim_{\epsilon\to0}\frac{1}{\epsilon^2}&\E\left[W_{\epsilon,p}-W_p\big|X\right]\\
&=\sum_{\ell=0}^{p-2}\frac{(\ell+1)
    \|X\|^2}{d(d-1)}\left(W_{p-2}+W_\ell
    W_{p-2-\ell}\right)-\frac{p(p+d-2)}{d(d-1)}W_p\\
&=\frac{p}{2d(d-1)} \left[(p-1)W_2W_{p-2}+W_2\sum_{\ell=0}^{p-2}
    W_\ell W_{p-2-\ell}-2(p+d-2)W_p\right],
\end{split}\end{equation}

Compare with the corresponding expression \eqref{E:csym-linear1} in
the Hermitian case: the first term here is new, but the remaining two
terms are, to top order, $\frac{1}{2}$ times the corresponding term in
the Hermitian case (recall that in $\symmat{n}{\R}$,
$d=\frac{n(n-1)}{2}$).  The first term in \eqref{E:rsym-linear1} is of
smaller order than the remaining terms, and so the proofs of Theorems
\ref{T:traces-of-powers-means} and \ref{T:traces-of-powers-clt-normal}
are essentially the same as in the complex Hermitian case.

The proof of Proposition \ref{T:variance-bound-hermitian} carries over
verbatim to this case.

Returning to condition \eqref{inf-lincond} of Theorem
\ref{T:inf-abstract}, since the expectation of the left-hand side of
\eqref{E:rsym-linear1} is zero, if $Y_p:=W_p-\E W_p$ and
$Y_{\epsilon,p}:=W_{\epsilon,p}-\E W_{\epsilon,p}$, then
\begin{align*}
\lim_{\epsilon\to0}\frac{1}{\epsilon^2}\E\left[\left.Y_{\epsilon,p}-Y_p\right|X\right]&=\frac{p(p-1)}{2d(d-1)}(W_2W_{p-2}-\E
  W_2W_{p-2})\\&\quad+\frac{p}{2d(d-1)}\sum_{\ell=0}^{p-2}\left[W_2W_\ell W_{p-2-\ell}-\E
  W_2W_\ell W_{p-2-\ell}\right]-\frac{p(p+d-2)}{d(d-1)}Y_p.
\end{align*}

Again, the first term is new and the second two are very similar to
the Hermitian case, differing only in factors of 2 that correspond to
the change in dimension.  By recentering and applying Proposition
\ref{T:star-poly-concentration}, it is straightforward to check that
the new term is of smaller order  than the others and can be
incorporated into the constant in the final bound.  The proof of
Theorem \ref{T:traces-of-powers-clt-normal} then proceeds identically to the
Hermitian case.

\section*{Acknowledgements}

This research was partially supported by grants from the U.S.\
National Science Foundation (DMS-1612589 to E.M.)\ and the Simons
Foundation (\#315593 to M.M.). The authors thank the anonymous
referees for numerous comments that improved the exposition of this
paper.


\bibliographystyle{plain}
\bibliography{traces_of_powers}

\end{document}